\newtheorem{theorem}{Theorem}[section]
\newtheorem{proposition}[theorem]{Proposition}
\newtheorem{lemma}[theorem]{Lemma}
\newtheorem{remark}[theorem]{Remark}
\makeatletter \@addtoreset{equation}{section} \makeatother
\renewcommand{\theequation}{\thesection.\arabic{equation}}
\def\tilde{\widetilde}
\newcommand{\be}{\begin{equation}}
\newcommand{\ee}{\end{equation}}
\newcommand{\bes}{\begin{eqnarray}}
\newcommand{\ees}{\end{eqnarray}}
\def\ve{\varepsilon}
\def\bega{\begin{array}}
\def\enda{\end{array}}
\def\begi{\begin{itemize}}
\def\endi{\end{itemize}}
\def\bel{\begin{equation}\label}
\begin{document}

\title{ Existence and Uniqueness  of Global Weak  solutions of the  Camassa-Holm  Equation with a Forcing}
\author{ Shihui Zhu \footnote {Corresponding  E-mail: shihuizhumath@163.com; shihuizhumath@sicnu.edu.cn}   \\    Department of Mathematics,
Sichuan Normal University\\ Chengdu, Sichuan 610066, China }
 
\date{}
\maketitle

\begin{abstract}
In this paper, we study the global well-posedness for the Camassa-Holm(C-H) equation with a forcing  in $H^1(\mathbb{R})$ by the characteristic method. Due to the forcing, many important properties  to study the well-posedness of weak solutions do not   inherit  from the C-H equation without a forcing, such as conservation laws, integrability.
By exploiting  the balance law and some new estimates, we prove the existence and uniqueness of global weak solutions for the C-H equation with a forcing in $H^1(\mathbb{R})$.

\end{abstract}

2010\textit{\ Mathematical Subject Classification:}  35L05\quad 35D30\quad 76B15

\textit{Key Words:  Camassa-Holm equation, forcing, weak solution, uniqueness, characteristic method.} 
%%%%%%%%%%%%%
%
%%%%%%%%%%%%%I
 
\section{Introduction}

In this paper, we consider the C-H equation with  a forcing  in the following  form:
\be\label{E0} u_t-u_{txx}+3u u_x=2 u_xu_{xx}+u u_{xxx}+ku,\ee
where $u:=u(t,x): \mathbb{R}\times \mathbb{R}\rightarrow \mathbb{R}$. The differential operators are defined by $u_t:=\frac{\partial u}{\partial t} $, $u_x:=\frac{\partial u}{\partial x} $, $u_{xx}:=\frac{\partial^2 u}{\partial x^2} $, $u_{txx}:=\frac{\partial^3 u}{\partial t\partial x^2}$ and $u_{xxx}:=\frac{\partial^3 u}{\partial x^3} $.  $ku$ is the forcing term, where $k\in \mathbb{R}$ is a constant. 
 In particular, when $k=0$, Eq.(\ref{E0})  is a well-known integrable equation describing the velocity dynamics of shallow water waves, named Camassa-Holm equation, which models the propagation of unidirectional shallow water waves over a flat bottom by 
approximating the Green-Naghdi equations (see \cite{CL2009}), with formal 
Hamiltonian derivations provided in \cite{CH1993,FF1981}.

 In the last two decades, the C-H equation has attracted many attentions, and a lot of  interesting properties have been found, including integrability(\cite{CH1993,CGI2006,CM1999}), existence of peaked solitons and multi-peakons(\cite{ACHM1994,BSS1999}),   breaking waves(\cite{CE1998}). It should  point  out 
that tsunami modelling has been connected to the dynamics of the 
Camassa-Holm equation(see the discussion in \cite{CJ2008,Lakshmanan2007}).
   Among these important properties, the following three conservation laws are crucial in studying the C-H equation.
 \[F_1(u)=\int_{\mathbb{R}} (u-u_{xx})dx,\quad F_2(u)=\int_{\mathbb{R}} (u^2+u_x^2)dx,\quad F_3(u)=\int_{\mathbb{R}} (u^3+uu_x^2)dx.\]
This implies that  the C-H equation can be written in bi-Hamiltonian form which means a second, compatible structure:
 \[\partial_t F_2^{'}(u)=-\partial_xF_3^{'}(u),\]
 where $^{'}$ denotes the Fr\'echet derivative. Then,   the orbital stability of the peakon (a special global solution of the C-H equation in the form $u(t,x)=ce^{-|x-ct|}$, where $c\in \mathbb{R}$ is wave speed) is well understood  in \cite{CM2001,CS2000,DM2009}. With regards to the global existence of weak solutions for the Cauchy problem, in \cite{BC2007b,XZ2000,XZ2002},  the global existence and uniqueness of dispersive solutions is obtained as weak limits of viscous regularizations of the C-H equation in $H^1(\mathbb{R})$. The global existence of conservative solutions is studied in \cite{BC2007,HR2007} by the new characteristic method.
  Recently, the uniqueness of  conservative solutions  is proved in \cite{BCZ2015} by using the technique of a generalized  characteristic method.  
 
 However, for Eq.(\ref{E0}), due to the existence of the  forcing term $ku$,  the structure of Eq.(\ref{E0}) changes dramatically.  For example,  the peakon  $u(t,x)=ce^{-|x-ct|}$ is no longer a solution of Eq.(\ref{E0}), and in fact, from \cite{Ivanov2007}, Eq.(\ref{E0}) is not integrable. Most  important of all,  Eq.(\ref{E0}) does not have a Hamiltonian structure. A nature question is arising how to study the well-posedness of weak solutions  of Eq.(\ref{E0})? Whether there exists a global weak solution? And how about the uniqueness of the weak solution?

  Motivated by those questions, we try to apply  the new characteristic method established  in \cite{BC2007,BCZ2015,BCZ20152}, 
   to study the global well-posedness for the forcing C-H equation (\ref{E0}).  
We remark that  in \cite{BC2007}, the authors provided a new characteristic method to study the existence of global conservative solutions for the   C-H equation without forcing, relying heavily on the conservation of energy. But this is not essential.   The essential structure is the study of a balance law for the C-H equation.

Now, we briefly introduce our results and the key ideas of the proof.   
We first study the structure of Eq.(\ref{E0}). 
Let $p(x)=\frac 12 e^{-|x|}$, where $x\in \mathbb{R}$.  We collect some important properties of $p(x)$ in the following
 \begin{itemize}
\item [(i)] $\mathcal{F}[p] =\frac{1}{1+|\xi|^2}$.
\item [(ii)] $\mathcal{F}[p- p_{xx}]=1$ implies $p-p_{xx}=\delta(x)$, where $\delta$ is the Dirac function.
\item [(iii)] $(1-\partial_x^2)^{-1}u=p*u(x):=\int_{-\infty}^{+\infty}p(x-y)u(y)dy$.
\item [(iv)] $p*(u-u_{xx})=u$,
\end{itemize}
where $\mathcal{F}$  denotes the Fourier transformation in $\mathbb{R}$. Then,   Eq.(\ref{E0}) can be expressed in the following form.
\be\label{E}u_t+u u_x+P_x-kQ=0,
\ee where the singular intergral operators $P$ and $Q$ are defined by  \be\label{PQ}P: =  \int_{\mathbb{R}} p(x-y) [u^2+\frac 12 u_x^2](y)dy\ \ {\rm and}\ \ Q:=  \int _{\mathbb{R}}p(x-y)u(y)dy.\ee 
 For the smooth solutions of Eq.(\ref{E}), we differentiate (\ref{E}) with respect to $x$.  
\be\label{E1} u_{tx}+uu_{xx}+\frac{1}{2}  u_x^2-u^2+P-kQ_x=0.\ee
By multiplying $2u_x$, fortunately, we  find that Eq.(\ref{E}) satisfies the following balance law in the following form.
\be\label{Balance1} (u_x^2)_t+(uu_x^2)_x+2(-u^2+P-k Q_x)u_x=0. \ee
Indeed, (\ref{Balance1}) is called the balance law  due to the third term in (\ref{Balance1}) only includes  first-order derivative of $u$, although some nonlocal operators are involved. In the present paper, we   extend the arguments  for the C-H equation without a forcing 
 in \cite{BC2007,BCZ2015} to the forcing C-H equation (\ref{E})  by using only the balance law (\ref{Balance1}).  Actually, there are two main difficulties to study the existence and uniqueness of global weak solutions for the forcing  C-H equation (\ref{E}):  one is   
 the loss of conservation of energy, that is, 
\be\label{Difficulty} \int_{\mathbb{R}} [u^2(t,x)+u_x^2(t,x) ] dx \neq {\rm Constant};\ee
the other is the appearance of a new nonlocal singular operator $Q$.

We supplement Eq.(\ref{E}) with  the initial data
\be\label{EI}u(0,x)=u_0(x).\ee  Now, we state our main theorem for  the existence and uniqueness of global weak solutions for the Cauchy problem (\ref{E})-(\ref{EI}).
\begin{theorem}\label{main-theorem}
 Let $k\in \mathbb{R}$  be a constant. 
Suppose  $u_0\in H^1(\mathbb{R})$ is an absolutely continuous function on $x$. Then the Cauchy problem 
 (\ref{E})-(\ref{EI}) admits a global weak solution $u(t,x)\in H^1(\mathbb{R}) $ with 
 \be\label{weak-form}  \int_{\Gamma}\{-u_x \phi_t  -uu_x\phi_x+(-\frac12 u_x^2-u^2+P-kQ_x)\phi \}dxdt +\int_{\mathbb{R}} (u_0)_x \phi(0,x)dx=0 \ee
for every test function $\phi\in C_c^1(\Gamma)$ with $\Gamma= \{(t,x)\,\ |\  t\in \mathbb{R}, \ x\in \mathbb{R} \}$
Furthermore, the weak solution satisfies the following properties. 
\begin{itemize}
\item[(i)] $u(t,x)$ is $\frac12$-H\"older continuous with respect to $t$ and $x$, for t in any  bounded interval. 
\item[(ii)] For  every fixed $t\in \mathbb{R}$, the map $t\mapsto u(t,\cdot)$ is Lipschitz continuous under $L^2$-norm.
\item[(iii)]  The balance law (\ref{Balance1}) 
is satisfied in the following sense: there exists a family of Radon measures $\{\mu_{(t)},\, t\in \mathbb{R} \}$,  depending continuously on time and w.r.t the topology of weak convergence of measures, and   
for every $t\in \mathbb{R} $,  the absolutely continuous part of $\mu_{(t)}$ w.r.t. Lebesgue measure has density $u_x^2(t,\cdot)$, which provides a 
measure-valued solution to the balance law
\be\label{weak_en}  \int_{\Gamma} \{  u_x^2\phi_t+uu_x^2 \phi_x +  2u_x(u^2-P+k Q_x)  \phi\, \}dxdt    -\int_{\mathbb{R}} (u_{0})_{x}^{2}\phi(0,x)dx=0,  \ee
for every test function $\phi\in C_c^1(\Gamma)$.  
\item[(iv)]
Some continuous dependence result holds. Consider a sequence of initial data $ u_{0,n}$ such that 
$\|u_{0,n} -u_0\|_{H^1 }\rightarrow 0$, as $n\rightarrow+\infty$. Then
the corresponding solutions $u_n(t,x)$ converge to $u(t,x)$ uniformly for $(t,x)$
in any bounded sets.
\end{itemize}
\end{theorem}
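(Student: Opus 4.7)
The plan is to adapt the semigroup-of-characteristics construction of \cite{BC2007,BCZ2015} to the forced equation. Two structural features of \eqref{E} distinguish the present situation from the standard Camassa--Holm setting: the energy $\int(u^2+u_x^2)\,dx$ is no longer conserved but only controlled via the balance law \eqref{Balance1}, and the new linear-in-$u$ nonlocal term $kQ$ must be estimated in Lagrangian coordinates alongside the quadratic term $P$. The basic idea of the BC method --- resolving the breaking-wave singularity by passing to variables in which $u_x^2$ becomes a bounded, regular quantity --- carries over, but the energy evolution and the $Q$-estimate require separate treatment.

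First I introduce the characteristic $y(t,\xi)$ with $y_t(t,\xi)=u(t,y(t,\xi))$ and the dependent variables $U(t,\xi)=u(t,y(t,\xi))$, $v=2\arctan u_x$ (bounded even where $u_x$ blows up), and a cumulative-energy coordinate $H$ satisfying $H_\xi=(1+u_x^2)\,y_\xi$. The initial parametrization is chosen so that $\xi\mapsto y(0,\xi)+\int_0^{y(0,\xi)}u_{0,x}^2\,dx$ is linear. Using \eqref{E}, \eqref{E1} and \eqref{Balance1} in tandem, this yields an ODE system for $(y,U,v,H)$ whose right-hand sides involve the nonlocal quantities $P$ and $Q$ rewritten as $\xi$-integrals of the form $\int p(y(\xi)-y(\eta))\,F(U,v,H_\xi)(\eta)\,d\eta$. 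The forcing appears as a bounded linear perturbation $kQ$ in the $U$-equation and as an explicit source term in the $H$-equation, so $H$ is no longer $t$-independent along $\xi$-lines but grows in a controlled manner.

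Local existence then follows from Picard iteration in $H^1_\xi\cap L^\infty_\xi$ once one verifies that $P,Q$ are locally Lipschitz in these variables. Global existence reduces to a Gronwall argument: Young's inequality yields $\|Q\|_\infty+\|Q_x\|_\infty\le C\|u\|_{L^2}$, and combined with \eqref{Balance1} this gives $\|u(t,\cdot)\|_{H^1}^2\le e^{C|k|t}\|u_0\|_{H^1}^2$, ruling out blow-up of the Lagrangian variables on any bounded interval. To recover the Eulerian weak solution I invert $x=y(t,\xi)$ and set $u(t,x)=U(t,\xi)$; since $U$ is constant where $y_\xi=0$, $u$ is well defined, and substitution into \eqref{weak-form} via the change of variables $dx=y_\xi\,d\xi$ verifies the weak equation. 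Property (i) and (ii) follow from the uniform $H^1$ bound together with the embedding $H^1\hookrightarrow C^{1/2}$ and an $L^2$-bound on $u_t$ read off from \eqref{E}; for (iii), the measure $\mu_{(t)}$ is the push-forward of $H_\xi\,d\xi$ under $y(t,\cdot)$, whose absolutely continuous part has density $u_x^2(t,\cdot)$ on intervals where $y_\xi>0$, with $t$-continuity inherited from the Lagrangian picture; (iv) reduces to continuous dependence of the Lagrangian ODE on its initial data, composed with continuity of the inverse transformation on bounded sets.

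The main obstacle will be controlling $Q$ uniformly in time: because $Q$ is linear rather than quadratic in $u$, it does not fit directly into the standard energy-dissipation framework and must be absorbed by a Gronwall argument combining \eqref{Balance1} with the convolution estimate above. A secondary technicality is the verification of \eqref{weak_en} as a measure equation rather than a pointwise PDE: on intervals where $y_\xi=0$ (wave breaking in Eulerian coordinates) the density $u_x^2$ no longer exhausts $\mu_{(t)}$, and the singular part of $\mu_{(t)}$ must be tracked through $H$ in the test-function identity.
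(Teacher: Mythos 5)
Your proposal is correct and follows essentially the same route as the paper: the characteristic transformation to a semi-linear Lagrangian system in $u$, $v=2\arctan u_x$ and an energy-density variable, local existence by contraction with Young-type convolution estimates on $P,P_x,Q,Q_x$, global existence by a Gronwall bound replacing the lost energy conservation, and recovery of the Eulerian weak solution and of the measures $\mu_{(t)}$ by inverting $x(t,\cdot)$ and pushing forward the Lagrangian energy density. One small slip: $\mu_{(t)}$ should be the push-forward of $u_x^2\,y_\xi\,d\xi=\sin^2\frac{v}{2}\,H_\xi\,d\xi$ (the paper's $\xi\sin^2\frac{v}{2}\,dY$), not of $H_\xi\,d\xi$ itself, whose absolutely continuous part would have density $1+u_x^2$ rather than $u_x^2$.
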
 
Finally, we  prove  the uniqueness of the global weak solutions of Eq.(\ref{E}) satisfying the balance law (\ref{weak_en}) in the last section of this paper, as follows. 
\begin{theorem}\label{unique}
 Let $k\in \mathbb{R}$  be a constant.  For any initial data $u_0\in H^1(\mathbb{R})$, the Cauchy problem (\ref{E})-(\ref{EI}) has a unique global weak   solution satisfying (\ref{weak_en}). 
\end{theorem}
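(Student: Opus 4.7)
\medskip
\noindent\textbf{Proof proposal for Theorem \ref{unique}.}
The plan is to adapt the generalized characteristic framework of \cite{BCZ2015} to our forced equation. Given any weak solution $u$ satisfying the balance law (\ref{weak_en}), I would show that it corresponds, via a bijective change of variables to ``energy coordinates'', to a solution of an ODE system with a globally Lipschitz right-hand side; since such a system admits at most one solution for given initial data, uniqueness of $u$ follows.

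First I would introduce an energy variable $\xi\in\mathbb{R}$ by setting, at $t=0$,
\be\label{enercoord}
\xi \;=\; x(0,\xi) \,+\, \int_{-\infty}^{x(0,\xi)} (u_0)_x^2(y)\,dy,
\ee
and propagating $\xi\mapsto x(t,\xi)$ along the characteristic ODE $\dot x = u(t,x)$ (appropriately interpreted). Along these curves I would define $U(t,\xi)=u(t,x(t,\xi))$, the cumulative measure $q(t,\xi)=\mu_{(t)}\bigl((-\infty,x(t,\xi)]\bigr)+x(t,\xi)$, and $V(t,\xi)=u_x\cdot x_\xi$ (with the appropriate interpretation at points where $u_x$ has no trace). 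Differentiating in $t$ and using (\ref{E}) together with the measure-valued balance law (\ref{weak_en}), I would derive a closed semilinear ODE system for $(x,U,V,q)$, where the non-local terms $P,\,P_x,\,Q,\,Q_x$ are re-expressed as integrals against $d\xi$ using the pushforward $dx = x_\xi\, d\xi$ and the relation $u_x^2\,dx = (q_\xi - x_\xi)\,d\xi$.

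Next I would prove that the resulting right-hand side is Lipschitz with respect to $(x,U,V,q)$ in an appropriate function space. The terms coming from the unforced C-H equation are handled exactly as in \cite{BCZ2015}. The novel contribution is the forcing: $Q=p*u$ and $Q_x=p_x*u$ involve only a convolution with $p\in W^{1,\infty}(\mathbb{R})$ against $u$, and in the energy coordinates $u(y)=U(\xi')$ with $dy=x_\xi\,d\xi'$, so
\be\nn
Q(t,x(t,\xi)) = \int_{\mathbb{R}} p\bigl(x(t,\xi)-x(t,\xi')\bigr)\,U(t,\xi')\,x_\xi(t,\xi')\,d\xi',
\ee
and similarly for $Q_x$. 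Since $x_\xi \le q_\xi$ is uniformly bounded in $L^1\cap L^\infty$ and the kernel $p$ is bounded and Lipschitz, these operators depend Lipschitz-continuously on $(x,U,q)$; the constant $k$ is harmless. Hence the full system is globally Lipschitz on bounded sets, and Cauchy--Lipschitz theory gives a unique solution for each initial datum.

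The main obstacle, as in \cite{BCZ2015}, is showing that the initial data $(x(0,\cdot),U(0,\cdot),V(0,\cdot),q(0,\cdot))$ in the transformed coordinates is uniquely determined by $u_0$, and more seriously, that the passage from the Eulerian weak solution $u$ back to these Lagrangian variables is canonical, even at times and points where the measure $\mu_{(t)}$ develops a singular part due to wave breaking. Concretely, one has to prove that the decomposition of $\mu_{(t)}$ into absolutely continuous and singular parts evolves in the prescribed way and that $V/x_\xi$ coincides $\mathcal{L}^1$-a.e.\ with $u_x$. This will be handled, following \cite{BCZ2015}, by exploiting the balance law along characteristics: the crucial identity
\be\nn
\frac{d}{dt}\Bigl(V - \int_0^t (u^2-P+kQ_x)\circ x \cdot x_\xi\,ds\Bigr) = 0
\ee
(modulo terms that vanish in a suitable weak sense) pins down $V$ uniquely, and thereby the full trajectory. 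Once this is established, transforming back via $u(t,\cdot) = U(t,\xi(t,\cdot))$ yields the uniqueness of the weak solution, completing the proof.
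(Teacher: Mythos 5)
Your overall strategy coincides with the paper's: introduce the energy coordinate $\beta=x+\int_{-\infty}^{x}u_x^2\,dz$ (your $\xi$), rewrite the evolution along characteristics as a semilinear system whose right-hand side, including the new convolution terms $Q$ and $Q_x$, is Lipschitz in the transformed variables, invoke ODE uniqueness, and transform back. Your treatment of the forcing terms as convolutions with a bounded Lipschitz kernel is essentially what the paper does when it bounds $G_\beta$ in Lemma \ref{6.3} and rewrites $P,P_x,Q,Q_x$ in the $\beta$ variable in Step 4, and your Lipschitz statements for $x(t,\beta)$, $u(t,\beta)$ correspond to Lemmas \ref{6.2} and \ref{6.4}.

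The gap is that the step you set aside as ``the main obstacle'' is the actual content of the uniqueness proof, and the substitute identity you offer in its place is not correct. Uniqueness of the Lagrangian ODE system only yields uniqueness among solutions already known to arise from that system; for an \emph{arbitrary} weak solution satisfying (\ref{weak_en}) one must first prove that through every point there is a unique Lipschitz curve satisfying simultaneously $\dot x=u(t,x)$ and the integrated balance law, and that $u$ along it satisfies (\ref{La622}). The paper does this in Lemma \ref{6.3}: it collapses (\ref{Characteristic}) and (\ref{La621}) into the single scalar integral equation (\ref{La6225}) with the Lipschitz function $G$ of (\ref{La6224}), and then verifies, by inserting explicit Lipschitz test functions $\min\{\rho^{\epsilon},\chi^{\epsilon}\}$ into (\ref{weak_en}) and (with $\phi=\varphi_x$) into (\ref{weak-form}), that the curve so obtained really has speed $u$ and that (\ref{La622}) holds; a further Rademacher/``good characteristic'' selection is needed before one may differentiate $x_\beta$, $u_\beta$ along such curves at all. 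None of this is supplied by your sketch, and it is exactly where the source term enters (through $G$), so it cannot be cited away unchanged from the conservative case. Moreover, your pinning-down identity $\frac{d}{dt}\bigl(V-\int_0^t(u^2-P+kQ_x)\circ x\cdot x_\xi\,ds\bigr)=0$ is false as written: differentiating (\ref{La622}) and using $P_{xx}=P-u^2-\frac12u_x^2$ gives $\frac{d}{dt}V=(u^2-P+kQ_x)\,x_\xi+\frac12\,u_x^2x_\xi=(u^2-P+kQ_x)\,x_\xi+\frac12(q_\xi-x_\xi)$ in your notation. The omitted term $\frac12 u_x^2x_\xi$ does not vanish in any weak sense; it is precisely the term responsible for wave breaking, and it is why the paper closes the system with the circle-valued variable $v=2\arctan u_x$ (set to $\pi$ where $x_\beta=0$) in (\ref{xubt})--(\ref{xuv}) rather than with $V$ alone. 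With that term restored and the characteristic lemma actually proved, your plan reduces to the paper's argument.
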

We   point out that  in this paper we verified that the methods for both existence and uniqueness of  weak solutions
used  in \cite{BC2007,BCZ2015} can be extended to Eq.(\ref{E}) by exploring only a balance law (\ref{Balance1}) instead of a conservation of energy.  However, the jump for using a balance law instead of a conservation law  is very nontrivial.  This idea has also been used for a generalized wave equation with a higher-order nonlinearity in our forthcoming paper  \cite{CSZ2015}. 

%%%%%%%%%%%%%

\renewcommand{\theequation}{\thesection.\arabic{equation}}
\setcounter{equation}{0}
\section{The  Transferred System}

 In this paper, we firstly study the global  weak solutions of (\ref{E})-(\ref{EI}) in $H^1(\mathbb{R})$, in terms of Bressan and Constantin's arguments in \cite{BC2007}(see also\cite{CS2015}). First, we shall use the characteristic method to transfer the quasi-linear Eq.(\ref{E}) to a semi-linear system in terms of smoothing solutions. Then, we 
prove the existence of global  weak solutions for the transferred system by the ODE's argument. 
Finally, by the inverse transformation, we return to the original problem. 
    More precisely,  
 we introduce a new coordinate  $(T,Y)$ defined
\be\label{2.1}(t,x)\longrightarrow (T,Y),\ \ \ {\rm where }\ \ \ T=t, \ \ Y=\int_0^{x_0(Y)} (1+(u_0)_x^2) dx.\ee
 The equation of the characteristic is 
 \be\label{C}\frac{dx(t,Y)}{dt}=u (t,x(t,Y))\ \ \ \ {\rm with }\ \ \ x(0,Y)=x_0(Y).\ee
Here, $Y=Y(t,x)$ is  a characteristic coordinate and satisfies
$Y_t+u Y_x=0$
for any $(t,x)\in \mathbb{R}\times \mathbb{R} $.  After some computations, we see that for any smoothing function $f:=f(T,Y(t,x))$ have the following properties, under the new coordinates $(T,Y)$: 
 \[f_T=f_T(\ T_t+u T_x)+f_Y\ (Y_t+u   Y_x)=f_t+u f_x,\]
 \[f_x=f_T\ T_x+ f_Y\ Y_x=f_Y\ Y_x.\] 
Take the transformation:
 \be\label{T} v:=2\arctan u_x   \ \ \ {\rm and }\ \ \ \xi:=\frac{ 1+u_x^2 }{Y_x}.\ee
Under this transformation, we see that 
 \be\label{T1} u_x= \tan \frac v2\ \ \ \ \ \ \ \ \ 1+u_x^2=\sec^2\frac v2,\ \ \ \ \ \frac{1}{1+u_x^2}=\cos^2\frac v2,\ee
\be\label{T2}\frac{u_x^2}{1+u_x^2}=\sin^2\frac v2,\ \  \frac{u_x}{1+u_x^2}=\frac12\sin v,\ \ \ x_Y=\frac{\xi}{ 1+u_x^2  }=\cos^{2}\frac v2 \cdot \xi.\ee
Then, we will consider Eq.(\ref{E}) under the new characteristic coordinate $(T,Y)$. 
$u_T=u_t+u u_x=-P_x+kQ$. 
From (\ref{E1}), we deduce that 
\[\begin{array}{lll}
v_T&=\frac{2}{1+u_x^2}(u_x)_T\\
&=\frac{2}{1+u_x^2} (u_{xt}+u u_{xx})\\

&=\frac{2}{1+u_x^2}( -\frac{1}{2} u_x^2+u^{ 2}-P +kQ_x)\\

&=- \sin^2\frac v2+2 \cos^2\frac v2  (u^2-P +kQ_x ).
\end{array}\]
For $\xi_T$, from $Y_t+u Y_x=0$, we have $Y_{tx}+uY_{xx}=-u_xY_x$. Then,  
  (\ref{Balance1})  implies 
\[\begin{array}{lll}
\xi_T&=\frac{2u_x}{Y_x}  (u_{tx}+uu_{xx}) +\frac{-(1+u_x^2) }{Y_x^2}(Y_{tx}+uY_{xx})\\

&=\frac{1}{Y_x} (2u_xu_{tx}+2uu_xu_{xx}+u_x^3+u_x)\\

&=2\frac{1+u_x^2}{Y_x} \frac{u_x}{1+u_x^2}[\frac{(u_x^2)_t+(uu_x^2)_x}{2u_x}+\frac12]\\

&=\xi \sin v (\frac 12 +u^2-P+kQ_x).
\end{array}\]
In conclusion, from the new transformation (\ref{T}),     (\ref{E}) becomes
\be\label{S-1}
\left\{\begin{array}{lll}
&u_T=-P_x+kQ,\\

& v_T=- \sin^2\frac v2+2 \cos^2\frac v2  (u^2-P +kQ_x ),\\
 
& \xi_T=\xi \sin v (\frac 12 +u^2-P+kQ_x).
\end{array}\right.
\ee
We supplement (\ref{S-1}) with the initial data under the new coordinate $(T,Y)$
\be\label{S-2}
\left\{\begin{array}{lll}
&u(0,Y)=u_0(x_0(Y))\\

& v(0,Y)=2 \arctan (u_0)_x(x_0(Y)),\\
 
& \xi(0,Y)=1.
\end{array}\right.
\ee
Next, we show  the expression of $P$, $P_x$, $Q$ and $Q_x$ under the new coordinate $(T,Y)$. It follows from the last formula in (\ref{T2}) that 
\[x(T,Y)-x(T,Y^{'})=\int_{Y^{'}}^Y \cos^{2}\frac{v(T,s)}{2} \cdot \xi(t,s)ds.\]
We take $x=x(T,Y^{'})$, then $dx=\frac{\xi}{1+u_x^2}d Y^{'}$, the validity of which will be checked below. Thus,    $P$, $P_x$, $Q$ and $Q_x$ has the following form under  $(T,Y)$.
\be\label{P1}\begin{array}{lll}P(T,Y)&=P(T,x(T,Y))=p*[ u^{2} + \frac12 u_x^{2}]=\frac 12\int_{-\infty}^{+\infty}e^{-|x(T,Y)-x|}[u^{2} + \frac12 u_x^{2}]dx\\

&=\frac 12\int_{-\infty}^{+\infty}e^{-|\int_{Y}^{Y^{'}}\cos^2\frac{v(s)}{2} \cdot\xi(s)ds|} [ u^{2}\cos^2\frac{v(Y^{'})}{2} +\frac{1}{2}\sin^{2}\frac{v(Y^{'})}{2} ] \xi(Y^{'}) dY^{'},\\
\end{array}\ee
\be\label{P2}\begin{array}{lll}P_x(T,Y)&=P_x(T,x(T,Y))=p_x*[\frac{2\lambda+1 }{2}u^{\lambda}u_x^2+ u^{\lambda+2}]\\

&=\frac 12(\int_{Y}^{+\infty}-\int_{-\infty}^{Y})e^{-|\int_{Y}^{Y^{'}}\cos^2\frac{v(s)}{2} \cdot\xi(s)ds|}\\

&\qquad \qquad \qquad   \cdot [ u^{2}\cos^2\frac{v(Y^{'})}{2} +\frac{1}{2}\sin^{2}\frac{v(Y^{'})}{2} ] \xi(Y^{'}) dY^{'},
\end{array}\ee
\be\label{Q1} Q(T,Y)=\frac 12\int_{-\infty}^{+\infty}e^{-|\int_{Y}^{Y^{'}}\cos^2\frac{v(s)}{2} \cdot\xi(s)ds|} u\cos^2 \frac  {v(Y^{'})}{2}\xi(Y^{'}) dY^{'},\ee
\be\label{Q2} Q_x(T,Y)= \frac{1 }{2} (\int_{Y}^{+\infty}-\int_{-\infty}^{Y})e^{-|\int_{Y}^{Y^{'}}\cos^2\frac{v(s)}{2} \cdot\xi(s)ds|}   u\cos^2 \frac  {v(Y^{'})}{2}\xi(Y^{'}) dY^{'}.\ee

We remark that the equivalent semi-linear system (\ref{S-1}) is invariant under translation by $2\pi $ in $v$. It would be more precise to use $e^{iv}$ as variable. For simplicity, we use $v\in[-\pi,\pi]$ with endpoints identified.

\renewcommand{\theequation}{\thesection.\arabic{equation}}
\setcounter{equation}{0}
\section{Local Existence of the Transferred System}

In this section, we will prove the existence of weak solutions of the transferred system (\ref{S-1})-(\ref{S-2}) by the contracting mapping theory. Here, the work space $X$ is defined by 
$X:= H^1(\mathbb{R})  \times[L^2(\mathbb{R})\cap L^{\infty}(\mathbb{R})]\times L^{\infty}(\mathbb{R})$
with its norm $\|(u,v,\xi)\|_{X}=\|u\|_{H^1 }+\|v\|_{L^2}+\|v\|_{L^{\infty}}+\|\xi\|_{L^{\infty}}$.  As usual, we shall prove the existence of a fixed point of the integral transformation: $\Phi(u,v,\xi)=(\widetilde{u},\widetilde{v},\widetilde{\xi})$, where $(\widetilde{u},\widetilde{v},\widetilde{\xi})$ is defined by 
\[
\left\{\begin{array}{lll}
& \widetilde{u}(T,Y)=u_0(x_0(Y)+\int_0^{T}(-P_x(\tau,Y)+kQ(\tau,Y))d\tau,\\

& \widetilde{v}(T,Y)=2 \arctan (u_0)_x(x_0(Y))+\int_0^{T}[- \sin^2\frac v2+2 \cos^2\frac v2  (u^2-P +kQ_x )]d\tau,\\
  
&  \widetilde{\xi}(T,Y)=1+\int_0^{T}[\xi \sin v (\frac 12 +u^2-P+kQ_x)]d\tau,
\end{array}\right.
\]
where $P$, $P_x$, $Q$ and $Q_x$ are defined by (\ref{P1})-(\ref{Q2}).

As the standard ODE's theory in the Banach space, we shall prove that all functions on the right hand side of (\ref{S-1}) are locally Lipschitz continuous with respect to $(u,v,\xi)$ in $X$. Then, we can obtain the local existence of  weak solutions of (\ref{S-1})-(\ref{S-2}), as follows.
\begin{proposition}  Let $k\in \mathbb{R}$  be a constant.   If  
$u_0\in H^1(\mathbb{R})  $, then the Cauchy problem (\ref{S-1})-(\ref{S-2}) has a unique solution on the interval $[0, T]$.

\end{proposition}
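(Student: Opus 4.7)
The strategy is a standard Picard iteration: exhibit a closed ball in $X$ on which the integral operator $\Phi$ is a contraction for sufficiently small $T$, and then invoke the Banach fixed point theorem. Since the vector field on the right-hand side of (\ref{S-1}) is autonomous in the time variable $T$ once $(u,v,\xi)$ are regarded as elements of $X$, everything reduces to verifying that each of the four building blocks $u$, $\sin^2\tfrac{v}{2}$, $\cos^2\tfrac{v}{2}$, and $\xi$, together with the four nonlocal operators $P$, $P_x$, $Q$, $Q_x$ defined in (\ref{P1})--(\ref{Q2}), depend in a \emph{locally Lipschitz} fashion on $(u,v,\xi)\in X$.

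The plan is to first fix constants $R>\|(u_0(x_0(\cdot)),2\arctan(u_0)_x(x_0(\cdot)),1)\|_X$ and $\xi_{\min}\in(0,1)$, and work in the closed subset
\[
\mathcal{D}_R=\{(u,v,\xi)\in X:\ \|(u,v,\xi)\|_X\le R,\ \xi(Y)\ge \xi_{\min},\ \cos^2\tfrac{v(Y)}{2}\ge c_0\ \text{a.e.}\},
\]
for some $c_0>0$. The sign constraints on $\xi$ and on $\cos^2\tfrac{v}{2}$ are crucial: they guarantee that the exponents $\int_Y^{Y'}\cos^2\tfrac{v(s)}{2}\,\xi(s)\,ds$ appearing in (\ref{P1})--(\ref{Q2}) grow linearly in $|Y'-Y|$, so that the convolution kernels are genuinely exponentially decaying, and the integrals defining $P$, $P_x$, $Q$, $Q_x$ converge and live in $L^\infty\cap L^2$ with norms controlled by $R$. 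For the initial datum these conditions hold with room to spare (with $\xi\equiv 1$), so by continuity they will be preserved for small $T$.

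Next I would establish four local Lipschitz estimates of the form
\[
\|P(u_1,v_1,\xi_1)-P(u_2,v_2,\xi_2)\|_{H^1}\le C_R\bigl(\|u_1-u_2\|_{H^1}+\|v_1-v_2\|_{L^2\cap L^\infty}+\|\xi_1-\xi_2\|_{L^\infty}\bigr),
\]
and similarly for $P_x$, $Q$, $Q_x$. The verification splits into (a) Lipschitz dependence of the integrand $[u^2\cos^2\tfrac{v}{2}+\tfrac12\sin^2\tfrac{v}{2}]\xi$ (resp.\ $u\cos^2\tfrac{v}{2}\xi$) on $(u,v,\xi)$, which is routine using $|\cos^2\tfrac{a}{2}-\cos^2\tfrac{b}{2}|\le |a-b|$ and Sobolev embedding $H^1\hookrightarrow L^\infty$; and (b) Lipschitz dependence of the exponential weight, which follows from $|e^{-|A|}-e^{-|B|}|\le |A-B|$ together with the uniform bound on $\int_Y^{Y'}(\cos^2\tfrac{v}{2}\xi-\cos^2\tfrac{\tilde v}{2}\tilde\xi)\,ds$ inside the spatial support furnished by the exponential decay itself. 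Combining these, each integral in (\ref{P1})--(\ref{Q2}) is locally Lipschitz from $\mathcal{D}_R$ into $L^\infty(\mathbb{R})$ (and with a bit more work, into $L^2(\mathbb{R})\cap L^\infty(\mathbb{R})$ for $P$ and $Q$).

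With these Lipschitz bounds in hand, the right-hand sides of (\ref{S-1}) are locally Lipschitz functions of $(u,v,\xi)$ into $X$; consequently $\Phi$ maps $\mathcal{D}_R$ into itself and is a strict contraction provided $T>0$ is chosen small enough. The Banach fixed point theorem yields a unique solution in $\mathcal{D}_R$, and then a standard continuation argument extends the solution until $(u,v,\xi)$ leaves every $\mathcal{D}_R$. The main obstacle, as anticipated above, is the estimate on the exponential weight: controlling the difference of the two exponentials requires using the decay they themselves produce to absorb the factor $|Y'-Y|$ generated by the Lipschitz estimate on the exponent, which is the one nonroutine calculation. Once that is carried out, all other estimates are straightforward.
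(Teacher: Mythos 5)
Your overall architecture (Picard iteration on a bounded subset of $X$, local Lipschitz estimates for $P$, $P_x$, $Q$, $Q_x$, then Banach fixed point) is the same as the paper's, but your choice of domain contains a genuine gap: you impose the pointwise constraint $\cos^2\frac{v(Y)}{2}\ge c_0$ a.e.\ and justify it by claiming the initial datum satisfies it ``with room to spare''. It does not. The initial value is $v(0,Y)=2\arctan (u_0)_x(x_0(Y))$, and the only hypothesis is $u_0\in H^1(\mathbb{R})$, so $(u_0)_x$ is merely an $L^2$ function; it may be arbitrarily large on sets of positive measure, hence $\cos^2\frac{v(0,Y)}{2}=\bigl(1+(u_0)_x^2(x_0(Y))\bigr)^{-1}$ admits no positive a.e.\ lower bound. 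Thus $\mathcal{D}_R$ need not contain the initial data at all, and the contraction argument cannot even start. Moreover, even for data that initially satisfy such a bound, the constraint is not propagated: $v\to\pm\pi$ is precisely the wave-breaking mechanism encoded in the system, so the invariance of $\mathcal{D}_R$ under $\Phi$ (which you assert but do not verify) fails in general, and your claim that the exponents $\int_Y^{Y'}\cos^2\frac{v(s)}{2}\,\xi(s)\,ds$ ``grow linearly in $|Y'-Y|$'' rests entirely on this unavailable pointwise bound.

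The missing idea --- and the one genuinely nonroutine point of the paper's proof --- is that no pointwise lower bound on $\cos^2\frac{v}{2}$ is needed. The paper's domain $K$ requires only $\|v\|_{L^2}\le B$, $\|v\|_{L^\infty}\le\frac{3\pi}{2}$ and $\xi\in[C^-,C^+]$, and then uses a Chebyshev-type measure estimate: where $\cos^2\frac{v}{2}\le\frac12$ one has $\sin^2\frac{v}{2}\ge\frac12$, so ${\rm meas}\{Y:\ |\frac{v(Y)}{2}|\ge\frac{\pi}{4}\}\le\frac12\|v\|_{L^2}^2\le\frac12 B^2$. Consequently, for any $z_1<z_2$, $\int_{z_1}^{z_2}\cos^2\frac{v}{2}\,\xi\,dz\ge\frac{C^-}{2}\bigl(|z_2-z_1|-\frac{B^2}{2}\bigr)$: the exponent grows linearly only up to a fixed additive constant, which is nevertheless enough to dominate the kernel by the integrable function $g(z)=\min\{1,e^{\frac{C^-}{2}(\frac{B^2}{2}-|z|)}\}$ and apply Young's inequality, i.e.\ estimate (\ref{Singular}). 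If you replace your pointwise constraint by this measure-theoretic decay argument, the remainder of your proposal (Lipschitz dependence of the integrands and of the exponential weights, $\Phi$ mapping the domain into itself, contraction for small $T$) goes through essentially as in the paper.
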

\begin{proof}
Let $K\subset X$ be a bounded domain and defined by 
\[K =\{(u,v,\xi)| \ \|u\|_{H^1}\leq A,\ \|v\|_{L^2}\leq B,  \|v\|_{L^{\infty}}\leq \frac{3\pi}{2},  
 \xi(Y)\in[C^-, C^+] \}\]
  for a.e.  $Y\in \mathbb{R}$  and constants $A$, $B$, $C^-$, $C^+$ $>0$. Then, if  the mapping $\Phi(u,v,\xi)$ is Lipschitz continuous on $K$, then
$\Phi(u,v,\xi)$ has a fixed point by contraction argument and   the existence of local solutions will be followed. 

In order to do so, first, it follows from the Sobolev embedding $H^{1 }(\mathbb{R})\hookrightarrow L^{\infty}(\mathbb{R})$ with any $q>1$ that 
$\|u\|_{L^{\infty}}\leq C\|u\|_{H^1}$. 
Due to the uniform boundness of $v$ and $\xi$, the following maps:
\[    \sin^2\frac v2, \ u^{ 2}\cos^2\frac v2, \     \xi  \sin v,\  u^2 \xi \sin v\]
are all Lipschitz continuous as maps from $K$ into $L^2(\mathbb{R})$, as well as  from $K$ into $L^{\infty}(\mathbb{R})$. 
In order to estimate the singular integrals in $P$, $P_x$, $Q$ and $Q_x$, we observe that $e^{-|\int_{Y}^{Y^{'}}\cos^2\frac{v(s)}{2} \cdot\xi(s)ds|}\leq 1$. But this is not enough to control the above singular integrals. Here, we claim the following estimate:    If $\|v\|_{L^2}\leq B$, then for any $(u,v,\xi)\in K$, $f\in L^q(\mathbb{R})$ with $q\geq 1$
\be\label{Singular} \|\int_{-\infty}^{+\infty}  e^{-|\int_{Y}^{Y^{'}}\cos^2\frac{v(s)}{2} \cdot\xi(s)ds|} f(Y^{'})d Y^{'}\|_{L^q}\leq  \|g\|_{L^1} \ \|f\|_{L^q},\ee
where $g(z)=\min\{1,e^{ \frac{C^-}{2 }(\frac{B^2 }{2 }-|z|)}\}$ and $\|g\|_{L^1}= B^2+\frac{4}{ C^-}$. 
 Indeed, when $ \frac{\pi}{4}\leq |\frac{v(Y)}{2}|\leq  \frac{3\pi}{4}$, we get $\sin^2\frac{v(Y)}{2}\geq \frac12 $. Then, 
for any $(u,v,\xi)\in K$,  
\[{\rm meas }\{Y\in \mathbb{R}|\ |\frac{v(Y)}{2}|\geq \frac{\pi}{4}  \} \leq  2  \int_{\{Y\in \mathbb{R}| \sin^2\frac{v(Y)}{2}\geq \frac{1}{2}\}} \sin^2\frac{v(Y)}{2}dY \leq    \frac{1  }{2}\|v\|_{L^2}^2.\]
For any $z_1<z_2$, we deduce that     
\[\begin{array}{lll}\int_{z_1}^{z_2}\cos^{2}\frac{v(z)}{2} \cdot\xi(z)dz

&\geq \frac{ C^-}{2 }(|z_2-z_1|- \int_{\{ z\in[z_1,z_2]|  |\frac{v(Y)}{2}|\geq \frac{\pi}{4} \}}dz)\\

&\geq  \frac{ C^-}{2 }(|z_2-z_1|-  \frac{1}{2}B^2 ).\end{array} \]
The above estimate guarantees proper control on the singular integrals in $P$, $P_x$, $Q$ and $Q_x$, which decreases quickly as $|Y-Y^{'}|\rightarrow +\infty$. Therefore, taking $g(z)=\min\{1,e^{ \frac{C^-}{2 }(\frac{B^2 }{2 }-|z|)}\}$, for every $q\geq 1$, we see that 
\[ |\int_{-\infty}^{+\infty}  e^{-|\int_{Y}^{Y^{'}}\cos^2\frac{v(s)}{2} \cdot\xi(s)ds|} f(Y^{'})d Y^{'}|\leq |g*f(Y)|.\] 
(\ref{Singular}) follows from the Young inequality. In addition, 
\[ \|g\|_{L^1} =\int_{-  \frac{B^2}{2 } }^{ \frac{B^2}{2 } } 1 dz+\int_{ \frac{B^2}{2} }^{+\infty} e^{  \frac{C^-}{2}(\frac{B^2}{2 } - z) }dz  +\int^{- \frac{B^2}{2 } }_{-\infty} e^{  \frac{C^-}{2 }( \frac{B^2}{2 } + z) }dz=  B^2+\frac{4}{ C^-}. \]
This completes the proof of Claim (\ref{Singular}).

Thirdly, by (\ref{Singular}), we give a priori bounds on $P$, $P_x$, $Q$ and $Q_x$, which implies $P$, $P_x$, $Q$ and $Q_x$ $\in H^{1}(\mathbb{R})$. More precisely, we deduce that 
 $P$, $\partial_YP$, $P_x$, $\partial_YP_x$, $Q$, $\partial_YQ$, $Q_x$, $\partial_YQ_x$ $\in L^2(\mathbb{R})$.
Indeed, we give estimates for $Q$, and the estimates for $P$ can be obtain by similar argument. By applying (\ref{Singular}), we deduce that 
{\small
\be\label{Q-1} \partial_YQ  =\frac 12 (\int_{Y}^{+\infty}-\int^{Y}_{-\infty}) 
e^{-|\int_{Y}^{Y^{'}}\cos^2\frac{v(s)}{2} \cdot\xi(s)ds|} {\rm sign} (Y^{'}-Y)   \cdot[ u  \cos^{4}\frac{v(Y^{'})}{2}] \cdot \xi^2(Y^{'}) dY^{'},\ee
\be\label{Q-2}\begin{array}{lll}\partial_YQ_x &=-u\xi\cos^2 \frac{v(Y^{'})}{2}+
\frac{1 }{2} \int_{-\infty}^{+\infty}e^{-|\int_{Y}^{Y^{'}}\cos^2\frac{v(s)}{2} \cdot\xi(s)ds|}  \cdot[ u  \cos^{4}\frac{v(Y^{'})}{2}] \cdot \xi^2(Y^{'}) dY^{'}.\end{array}\ee
}
Then, we have 
\be\label{QL2-1}\|Q\|_{L^2}\leq \frac {C^+}{2} \|g\|_{L^1} \|u\|_{L^2},\ \ \|\partial_YQ\|_{L^2}\leq (C^+)^2 \|g\|_{L^1} \|u\|_{L^2},\ee
 \be\label{QL2-2}\|Q_x\|_{L^2}\leq C^+  \|g\|_{L^1} \|u\|_{L^2},\ \ \|\partial_YQ_x\|_{L^2}\leq(C^++ \frac{(C^+)^2}{2} \|g\|_{L^1}) \|u\|_{L^2}. \ee
By the same arguments, we can obtain the similar estimates for $P$ in the following. 
 \be\label{PL2-1}\begin{array}{lll}
 \vspace{0.1cm}
 &\|P\|_{L^2}\leq \frac {C^+}{2} \|g\|_{L^1} (\|u\|_{L^\infty}\|u\|_{L^2}+\frac 18\|v\|_{L^2}),\\
  \vspace{0.1cm}
 &\|\partial_YQ\|_{L^2}\leq (C^+)^2   \|g\|_{L^1} (\|u\|_{L^\infty}\|u\|_{L^2}+\frac 18\|v\|_{L^2}),\\
  \vspace{0.1cm}
 &\|P_x\|_{L^2}\leq C^+     \|g\|_{L^1} (\|u\|_{L^\infty}\|u\|_{L^2}+\frac 18\|v\|_{L^2}),\\
 
&\|\partial_YP_x\|_{L^2}\leq(C^++ (C^+)^2  \|g\|_{L^1}) (\|u\|_{L^\infty}\|u\|_{L^2}+\frac 18\|v\|_{L^2}). \end{array}\ee
Using the Sobolev inequality, we have $\|u\|_{L^\infty}\leq C\|u\|_{H^1}$ and this implies $P$, $P_x$, $Q$ and $Q_x$ $\in H^{1}(\mathbb{R})$. Then, according to  the above discussion, we show that the mapping $\Phi (u,v,\xi)$ is from $K$ to $K$.

Fourthly,   we shall prove the mapping $\Phi (u,v,\xi)$ is Lipschitz continuity with respect to $(u,v,\xi)$. More precisely, we need to prove that the following partial derivatives
$\frac{\partial P}{\partial u}, \ \frac{\partial P}{\partial v}, \ \frac{\partial P}{\partial \xi}, \ \frac{\partial P_x}{\partial u},\ \frac{\partial P_x}{\partial v},\ \frac{\partial P_x}{\partial \xi}$, 
$\frac{\partial Q}{\partial u}, \ \frac{\partial Q}{\partial v}, \ \frac{\partial Q}{\partial \xi}, \ \frac{\partial Q_x}{\partial u},\ \frac{\partial Q_x}{\partial v},\ \frac{\partial Q_x}{\partial \xi}$
are uniformly bounded as $(u,v,\xi)$ range inside the domain $K$. We observe these derivatives are bounded linear operators from the appropriate spaces into $H^{1 }(\mathbb{R})$. For sake of illustration, we shall give the detail estimates for $Q$. And the estimates for $P$ will be similar obtained.
 In other words, we shall prove $ \frac{\partial Q}{\partial u}$ and $\frac{\partial Q_x}{\partial u}$ are linear operators from $H^1(\mathbb{R})$ into $H^1(\mathbb{R})$;  $\frac{\partial Q}{\partial v}$ and $\frac{\partial Q_x}{\partial v}$ are linear operators from $L^2(\mathbb{R})\bigcap L^{\infty}(\mathbb{R})$ into $L^2(\mathbb{R})$; $\frac{\partial Q}{\partial \xi}$ and $\frac{\partial Q_x}{\partial \xi}$ are linear operators from $  L^{\infty}(\mathbb{R})$ into $L^2(\mathbb{R})$. 
 
 Here, we shall take $ \frac{\partial Q}{\partial u}$ and $ \frac{\partial (\partial_YQ)}{\partial u}$ as examples to illustrate the main idea. Indeed,  $\forall\ \phi \in H^1(\mathbb{R})$,
  \[\begin{array}{lll}\vspace{0.1cm}
  & \|\frac{\partial Q}{\partial u}\cdot \phi\|_{L^2}\leq \frac 12 \|\xi\|_{L^\infty}\|g*\phi\|_{L^2}\leq \frac {C^+}{2} \|g\|_{L^1}  \|\phi\|_{H^1},\\
  \vspace{0.1cm}
  & \|\frac{\partial (\partial_YQ)}{\partial u}\cdot \phi\|_{L^2}\leq  \|\xi\|_{L^\infty}^2\|g*\phi\|_{L^2}\leq  (C^+)^2  \|g\|_{L^1}  \|\phi\|_{H^1},\\
\vspace{0.1cm}
  
 &  \|\frac{\partial Q_x}{\partial u}\cdot \phi\|_{L^2}\leq   \|\xi\|_{L^\infty}\|g*\phi\|_{L^2}\leq  C^+ \|g\|_{L^1}  \|\phi\|_{H^1},\\
  
  & \|\frac{\partial (\partial_YQ_x)}{\partial u}\cdot \phi\|_{L^2}\leq \|\xi\|_{L^\infty}\| \phi\|_{L^2}+\frac12 \|\xi\|_{L^\infty}^2\|g*\phi\|_{L^2}\leq (C^++\frac{(C^+)^2}{2}  \|g\|_{L^1} ) \|\phi\|_{H^1}.  \end{array}\]
 Thus, the linear operator $\frac{\partial Q}{\partial u}$, $\frac{\partial (\partial_YQ)}{\partial u}$, $\frac{\partial Q_x}{\partial u}$ and $\frac{\partial (\partial_YQ_x)}{\partial u}$ are bounded from $H^1(\mathbb{R})$ into $L^2(\mathbb{R})$.

Finally, we prove the mapping $\Phi (u,v,\xi)$ is uniformly Lipschitz continuity on a neighborhood $K$ of the initial data in the space $X$. Then, we can apply the standard theory of ODE's local existence in Banach spaces, there exist a solutions to the Cauchy problem (\ref{S-1})-(\ref{S-2}) on small time interval $[-T,T]$.  This completes the proof of Proposition 3.1.
\end{proof}

\renewcommand{\theequation}{\thesection.\arabic{equation}}
\setcounter{equation}{0}
\section{Global Existence of the Transferred System}

In this section, we shall prove that the local solution  for the Cauchy problem (\ref{S-1})-(\ref{S-2}) can be extended  to the global one. To ensure this, we need to show that there exists a   bound $C(E_0)>0$ such that 
\be\label{B}\|u(T)\|_{H^{1}}+\|v(T)\|_{L^2}+\|v(T)\|_{L^{\infty}}+\|\xi(T)\|_{L^{\infty}}+\|\frac{1}{\xi(T)}\|_{L^{\infty}}\leq C(E_0),\ee
for all $T\in \mathbb{R}$. Then, we obtain the following proposition. 
\begin{proposition} \label{Global}  Let $k\in \mathbb{R}$  be a constant.  If the initial data 
$u_0\in H^{1}(\mathbb{R})$, then the Cauchy problem (\ref{S-1})-(\ref{S-2}) has a unique solution, defined for all time  $T\in \mathbb{R}$.
\end{proposition}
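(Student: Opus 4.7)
The plan is to extend the local solution of Proposition~3.1 to all of $\mathbb{R}$ by proving the a priori bound (4.1); since the local existence time in Proposition~3.1 depends only on the parameters $A,B,C^\pm$ defining $K$, it suffices to show these remain finite on every bounded $T$-interval. The key input is a Gronwall estimate for the quantity
\[E(T):=\int_{\mathbb{R}}\Bigl[u^2\cos^2\tfrac{v}{2}+\sin^2\tfrac{v}{2}\Bigr]\xi\,dY,\]
which, via $x_Y=\cos^2(v/2)\,\xi$ and $u_x=\tan(v/2)$, coincides with $\int_{\mathbb{R}}(u^2+u_x^2)\,dx$, i.e.\ with $\|u(t,\cdot)\|_{H^1(dx)}^2$ along characteristics.

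\textbf{Step 1 (Energy identity).} Differentiating $E(T)$ under the integral and substituting $u_T,v_T,\xi_T$ from (3.1), all terms carrying the factor $u^2-P+kQ_x$ cancel in pairs. Converting the remaining terms to $x$-integrals via $\cos^2(v/2)\,\xi\,dY=dx$ and $\sin v\,\xi\,dY=2u_x\,dx$ yields
\[\frac{dE}{dT}=\int_{\mathbb{R}}\bigl[(u^3)_x-2(uP)_x\bigr]\,dx+2k\int_{\mathbb{R}}(uQ+u_xQ_x)\,dx.\]
The first integral vanishes by integration by parts (assuming decay at infinity). Integrating the second by parts and using property (iv) of $p$, namely $(1-\partial_x^2)Q=u$, it reduces to $2k\int u(Q-Q_{xx})\,dx=2k\|u\|_{L^2}^2$. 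Hence $|\dot E|\le 2|k|E$, and Gronwall yields $E(T)\le E(0)\,e^{2|k||T|}$. (Equivalently, one could start from the integrated balance law (1.6) plus the analogous identity for $u^2$.)

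\textbf{Step 2 (Pointwise bounds and extension).} By Sobolev embedding $\|u\|_{L^\infty(dx)}\le C\sqrt{E(T)}$. With $\|p\|_{L^\infty}=\|p_x\|_{L^2}=1/2$, Young's inequality applied to (2.10)--(2.13) gives $\|P\|_{L^\infty},\|P_x\|_{L^\infty}\le \tfrac{1}{2}E(T)$ and $\|Q\|_{L^\infty},\|Q_x\|_{L^\infty}\le C\sqrt{E(T)}$. Hence every coefficient on the right-hand side of (3.1) is bounded by some $M(T)$ depending only on $|k|,T,\|u_0\|_{H^1}$, and finite on bounded intervals. The $\xi$-equation is linear in $\xi$ with coefficient of size $\le M(T)$ and $\xi(0,Y)=1$, so
\[\exp\!\Bigl(-\!\int_0^{|T|}\! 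M(s)\,ds\Bigr)\le\xi(T,Y)\le\exp\!\Bigl(\int_0^{|T|}\! M(s)\,ds\Bigr),\]
giving simultaneous $L^\infty$ bounds on $\xi$ and $1/\xi$. Since $v=2\arctan u_x\in(-\pi,\pi)$ pointwise and $|v_T|\le M(T)$, the bound $\|v\|_{L^\infty}\le 3\pi/2$ is preserved. Converting $dY$-integrals to $dx$-integrals via the now-bounded $\xi$ and $u_Y=u_x\cos^2(v/2)\xi$, the norms $\|v\|_{L^2(dY)}$ and $\|u\|_{H^1(dY)}$ are controlled in terms of $E(T)$ and $\|1/\xi\|_{L^\infty}$. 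All $K$-parameters therefore stay bounded on every $[-N,N]$, so iterating Proposition~3.1 produces a solution for all $T\in\mathbb{R}$.

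The main obstacle is the cancellation in Step~1. In the unforced case ($k=0$) this collapses to exact conservation of $E$; with forcing, a naive bound on the nonlocal contribution $2k\int(uQ+u_xQ_x)\,dx$ would be superlinear in $E$ and permit finite-time blow-up. The algebraic identity $(1-\partial_x^2)Q=u$ is precisely what linearizes this contribution to the tame $2k\|u\|_{L^2}^2$, allowing Gronwall to close the bound globally in time.
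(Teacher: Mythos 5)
Your core estimate is the right one and matches the paper's: the modified energy $E(T)=\int(u^2\cos^2\frac v2+\sin^2\frac v2)\,\xi\,dY$ satisfies a linear Gronwall inequality, the terms containing $-P+kQ_x$ combine into exact derivatives, and the surviving forcing contribution is exactly $2k\|u\|_{L^2}^2$ (the paper writes it as $2k\int u^2\xi\cos^2\frac v2\,dY$ in (4.3); your route via $2k\int(uQ+u_xQ_x)\,dx$ and $(1-\partial_x^2)Q=u$ gives the same thing). However, your Step 1 and parts of Step 2 repeatedly convert between $dY$- and $dx$-integrals using $u_x=\tan\frac v2$, $x_Y=\cos^2\frac v2\,\xi$ and $\sin v\,\xi\,dY=2u_x\,dx$. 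For the solution of the ODE system (3.1) these are not definitions but identities that must be shown to \emph{persist} in time, since $u,v,\xi$ are now independent unknowns; the paper's first step in the proof of Proposition 4.1 is precisely to prove the claim $u_Y=\frac12\xi\sin v$ for all $T$ (its (4.2)), and it then performs the whole energy computation in the $Y$-variable so that only this identity is needed. As written, your cancellation argument silently assumes this persistence; you should either prove it (as the paper does) or carry out the computation intrinsically in $Y$.

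The more serious gap is your treatment of $v$. The assertion that ``$v=2\arctan u_x\in(-\pi,\pi)$ pointwise'' and hence $\|v\|_{L^\infty}\le 3\pi/2$ is preserved is false in exactly the interesting regime: the identification $v=2\arctan u_x$ holds only while the solution stays smooth, and the whole point of the characteristic variables is that at wave breaking $u_x\to-\infty$ and $v$ \emph{crosses} $-\pi$ (note $v_T=-1$ whenever $\cos v=-1$, so $v$ passes through $-\pi$ and may keep decreasing); the real-valued ODE solution $v$ is not confined to $(-\pi,\pi)$, and $|v_T|\le M$ only gives growth linear in time, which can exceed $3\pi/2$ on a bounded interval. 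This undermines your claim that $\|v\|_{L^2(dY)}$ is ``controlled in terms of $E(T)$ and $\|1/\xi\|_{L^\infty}$'', since that comparison implicitly uses $v^2\lesssim\sin^2\frac v2$, which fails once $|v|$ can approach $2\pi$. The paper avoids this: it bounds $\|v(T)\|_{L^\infty}$ simply by integrating the equation, proves a separate Gronwall estimate for $\|v(T)\|_{L^2}$ ((4.16)--(4.18)), and, crucially, replaces the $\|v\|_{L^2}$-based control of the singular integrals $P,P_x,Q,Q_x$ used in the local theory by Lemma 4.2, which requires only $\|\xi\sin^2\frac v2\|_{L^1}\le D_0^2$. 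Your argument can be repaired either by invoking the $2\pi$-translation invariance of (3.1) to renormalize $v$ into $[-\pi,\pi]$ at each restart, or by adopting the paper's Gronwall estimate for $\|v\|_{L^2}$; but as stated, this continuation step does not go through.
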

\begin{proof} In Proposition 3.1, we have proved the existence of the local solution  for the Cauchy problem (\ref{S-1})-(\ref{S-2}).  
  First of all, when $t=0$, from the transformation (\ref{T})-(\ref{T2}), we see that  $Y_x(0)= 1+(u_0)_x^2$, then 
\[u_Y(0,Y)=\frac{(u_0)_x}{ 1+(u_0)_x^2 }=\frac12  \sin v(0,Y)  \ \ {\rm and }\ \ \xi(0,Y)=1.\] We claim that  for all time  $T$,
\be\label{uY}u_Y=\frac12 \xi \sin v=\xi \sin \frac v2 \cos \frac v2  .\ee
Indeed, from the first identity in (\ref{S-1}),   we have 
\[\begin{array}{lll}
\vspace{0.1cm}
(u_Y)_T=(u_T)_Y&=-(P_x)_Y+kQ_Y=\frac{-P_{xx}+kQ_x}{Y_x}\\

&=\xi [ u^{2}\cos^2\frac{v}{2}+\frac12 \sin^2\frac v2 +\cos^2\frac v2 (-P+kQ_x)].\end{array}\]
From the last two identity in (\ref{S-1}), we see that for all  time   $T$
\[\begin{array}{lll}
\vspace{0.1cm}

 (\frac 12 \xi \sin v )_T&= \frac 12 \sin v\  \xi_T+\frac 12 \xi \cos v\ v_T\\
\vspace{0.1cm}

&= 2u^2\xi \sin^2 \frac v2 \cos^2 \frac v2+2\xi \sin^2 \frac v2\cos^2 \frac v2 (-P+kQ_x)\\
\vspace{0.1cm}
&\qquad +\frac 12 \xi \sin^2 \frac v2+2u^2\xi\cos^4 \frac v2+2\xi \cos^4\frac v2 (-P+kQ_x)\\
\vspace{0.1cm}
&\qquad -\xi u^2\cos^2 \frac v2-\xi \cos^2 \frac v2 (-P+kQ_x)\\

&=(u_Y)_T.
\end{array}\]
Then, it follows from    $\xi(0,Y)=1$  that (\ref{uY}) is true for all time$T$.  Next, we  estimate the functional  $E(T):=\int_{\mathbb{R}}(u^2\xi \cos^{2} \frac {v}{2}+\xi\sin^2\frac {v}{2})\ dY$ under the new coordinate. From (\ref{S-1}), we have
\be\label{new-energy}\begin{array}{lll}\vspace{0.1cm}
&\frac{d}{dT}\int_{\mathbb{R}}(u^2\xi \cos^{2} \frac {v}{2}+\xi\sin^2\frac {v}{2})\ dY\\
\vspace{0.1cm}
=&\int_{\mathbb{R}}\{(u^2\cos^{2} \frac {v}{2}+\sin^2\frac {v}{2})\xi_T+2u\xi\cos^2 \frac v2 \ u_T +\xi \sin\frac v2\cos\frac v2(1-u^2)v_T \}dY\\
\vspace{0.1cm}

=& \int_{\mathbb{R}}\{u^2\xi\sin \frac v2\cos^3 \frac v2+2u^4\xi \sin \frac v2 \cos^3\frac v2+2u^2\xi \sin\frac v2 \cos^3\frac v2 (-P+kQ_x)\\
\vspace{0.1cm}

&+\xi\sin^3\frac v2 \cos \frac v2+2u^2\xi \sin^3\frac v2\cos \frac v2+2\xi\sin^3\frac v2\cos\frac v2 (-P+kQ_x)\\
\vspace{0.1cm}

&+2u\xi \cos^2 \frac v2 (-P_x+kQ)-\xi\sin^3\frac v2\cos\frac v2+2u^2\xi \sin\frac v2\cos^3\frac v2\\
\vspace{0.1cm}
&+2\xi\sin\frac v2\cos^3\frac v2 (-P+kQ_x)+u^2\xi\sin^3\frac v2\cos\frac v2-2u^4\xi\sin\frac v2 \cos^3\frac v2\\
\vspace{0.1cm}

 &-2u^2\xi \sin\frac v2\cos^3\frac v2 (-P+kQ_x)\}dY\\

\vspace{0.1cm}

=&\int_{\mathbb{R}}\{3u^2\xi \sin\frac v2 \cos \frac v2+2u\xi \cos^2\frac v2(-P_x+kQ)+2\xi\sin\frac v2 \cos\frac v2(-P+kQ_x)\}dY\\

\vspace{0.1cm}

=&\int_{\mathbb{R}} 3u^2u_YdY+2 \int_{\mathbb{R}}[u(-P+kQ_x)]_YdY+2k\int_{\mathbb{R}}u^2\xi \cos^2 \frac v2 dY\\

=&2k\int_{\mathbb{R}}u^2\xi \cos^2 \frac v2 dY,
\end{array}\ee
where in the last two estimates, we use the facts: for any $u\in H^{1}(\mathbb{R})$, $\lim\limits_{|Y|\rightarrow +\infty} u(Y)=0$,
$(u^3)_Y=3u^2 u_Y=3u^2\xi \sin \frac v2 \cos \frac v2$ and 
\[2[u(-P+kQ_x)]_Y=2\xi \sin \frac v2\cos \frac v2(-P+kQ_x)+2u\xi\cos^2 \frac v2(-P_x+kQ)+2ku^2\xi\cos^2\frac v2.\]
Then, from (\ref{new-energy}), we can deduce that there exists a constant  $D_0:=D_0(E_0,T)>0$ such that the new energy $E(T)$ has a priori 
 on bounded intervals of
time by the Gronwall inequality. That is,  for any $T$ on bounded intervals,
\be\label{energy}E(T)\leq D_0^2.\ee

Secondly, we estimate $\|u(T)\|_{L^{\infty}}$, $\|\xi(T)\|_{L^{\infty}}$, $\|\frac{1}{\xi(T)}\|_{L^{\infty}}$ and  $\|v(T)\|_{L^{\infty}}$ with respect to $T$. For all the solutions of the Cauchy problem (\ref{S-1})-(\ref{S-2}), it follows from (\ref{uY}) and (\ref{energy}) that 
\[\begin{array}{lll}
\vspace{0.1cm}
\sup\limits_{Y\in \mathbb{R}} |u^2(T,Y)|&\leq \int_{\mathbb{R}}|(u^2)_Y|dY=\int_{\mathbb{R}} |u \sin \frac v2 \cos  \frac v2|\ \xi dY\\

&\leq \int_{\mathbb{R}}(u^2\xi\cos^{2 } \frac {v}{2}+\xi\sin^2\frac {v}{2}) dY,

\end{array}\]
which implies that $\|u(T)\|_{L^{\infty}}$ has a   priori bound $D_0$. More precisely, we have the following estimate:
\be\label{energy2}\|u(T)\|_{L^{\infty}}\leq D_0.\ee
By the definition of $P$,  $P_x$, $Q$ and $Q_x$ in (\ref{P1})-(\ref{Q2}), using (\ref{energy}) and (\ref{energy2}), we deduce that 
\be\label{energy3}  \|P(T)\|_{L^{\infty}}, \|P_x(T)\|_{L^{\infty}} \leq  \|u^2\xi \cos^2 \frac v2+\frac 12 \xi\sin^2\frac v2\|_{L^1} \leq D_0^2,\ee
\be\label{energy4} \|Q(T)\|_{L^{\infty}}, \|Q_x(T)\|_{L^{\infty}}\leq\|e^{-|\int_{Y}^{Y^{'}} \cos^{2}\frac v2 \cdot \xi ds|} \xi(T)\|_{L^{1}}\|u\|_{L^\infty}\leq  C D_0.\ee
Inject (\ref{energy3}) and (\ref{energy4}) into the last identity in (\ref{S-1}), we see that 
\[|\xi_T|\leq  \|\frac12+u^2-P+kQ_x\|_{L^\infty}\xi\leq (\frac 12 +2D_0^2+kCD_0)\xi.\]
And from the initial condition $\xi(0,Y)=1$, we get
\be\label{Bound1} \frac{1}{D_1}\leq \xi (T)\leq D_1,\ee  where $D_1=e^{(\frac 12 +2D_0^2+kCD_0)|T|}$. From the second  identity in (\ref{S-1}), 
we have
\[\frac{d}{dT}\|v(T)\|_{L^{\infty}}\leq 2(\|u(T)\|_{L^{\infty}}^{2} + \|P \|_{L^{\infty}}+ k\|Q_x \|_{L^{\infty}})\leq 2D_0( kC+2D_0).\]
Denote $D_2=2D_0(kC+2D_0)$. Then,  we have
\be\label{Bound2}  \|v(T)\|_{L^{\infty}} \leq e^{D_2 T}.\ee

Thirdy, we estimate $\|u(T)\|_{H^{1}}$ with respect to $T$. From $u_T=-P_x+kQ$,  
\be\label{1} \frac {d}{dT}\| u(T)\|_{L^{2}}^{2 } =2 \int_{\mathbb{R}} u  u_TdY\leq  2 \|u(T)\|_{L^{\infty}} (\|P_x\|_{L^1}+ k\| Q\|_{L^1}),\ee
\be\label{2} \frac {d}{dT}\| (u(T))_Y\|_{L^{2}}^{2}=2 \int_{\mathbb{R}} u_Y  (u_T)_YdY \leq 2 \|u_Y\|_{L^{\infty}} (\|(P_x)_Y\|_{L^1}+ k\|Q_Y\|_{L^1}).\ee
In order to obtain the boundness of  $\|u(T)\|_{H^{1}}$, we need the following lemma to estimate $\|P_x\|_{L^1}$,  $\| Q\|_{L^1}$, $\|(P_x)_Y\|_{L^1}$ and $\|Q_Y\|_{L^1}$.
\begin{lemma} \label{Lemma} Let $f\in L^q$ with $q\geq 1$. If $\|\xi \sin^2\frac {v(T)}{2} \|_{L^1}\leq D_0^2$, then 
\be\label{Singular2} \|\int_{-\infty}^{+\infty}  e^{-|\int_{Y}^{Y^{'}}\cos^2\frac{v(s)}{2} \cdot\xi(s)ds|} f(Y^{'})d Y^{'}\|_{L^q}\leq  \|h*f\|_{L^q}\leq\|h\|_{L^1} \ \|f\|_{L^q},\ee where $h(z)=\min\{1, e^{\frac{1}{2D_1}(  2D_1D_0^2-|z|)}\}$ and $\|h\|_{L^1}=4D_1D_0^2+4D_1$.
\end{lemma}
\begin{proof} We remark that $\|\xi \sin^2\frac {v(T)}{2}  \|_{L^1}\leq D_0^2$ does not imply that $\|v(T)\|_{L^2}$ is bounded, which is the main difficulty. As proved in above, we see that $\|v(T)\|_{L^\infty}$ is bounded, and we may assume $|v|\leq \frac {3\pi}{2}$ because $v(T)$ is invariant under multiplier of $2\pi$. Then,   when $\frac{\pi}{4}\leq |\frac{v(Y)}{2}|\leq  \frac{3\pi}{4}  $, we have $\sin^2   \frac{v(Y)}{2}\geq \frac12$.  And,  
we deduce that  
\be\label{Measure}\begin{array}{lll}& {\rm meas}\{Y\in \mathbb{R}|\ |\frac{v(Y)}{2}|\geq \frac{ \pi}{4}\}\\
\leq & {\rm meas} \{Y\in \mathbb{R}| \ \sin^2\frac{v(Y)}{2}\geq \frac 12\}\\

\leq& 2 \int_{\{Y\in \mathbb{R}| \sin^2\frac{v(Y)}{2}\geq \frac 12 \}}D_1\ \xi   \sin^2\frac{v(Y)}{2}dY\\

\leq&  2 D_1D_0^2.
\end{array}\ee
 For any $z_1<z_2$,  we have
\[\begin{array}{lll}\int_{z_1}^{z_2}\cos^{2}\frac{v(z)}{2} \cdot\xi(z)dz&\geq \frac{ 1}{2D_1} ( 
\int_{z_1}^{z_2}1 dz-\int_{\{ z\in [z_1,z_2]\ |\   |\frac{v(Y)}{2}|\geq \frac {\pi}{4} \}} 1 dz)\\

&\geq \frac{1}{2D_1}(|z_2-z_1|-  2 D_1D_0^2 ). \end{array}\]
 Let $h(z)=\min\{1, e^{\frac{1}{2D_1}(  2D_1D_0^2-|z|)}\}$. Then,  for every $q\geq 1$, (\ref{Singular2}) follows from the  Young inequality. In addition, by some computation, we have 
 \[\begin{array}{lll}
 \vspace{0.1cm}
 \|h\|_{L^1}&=\int_{- 2D_1D_0^2} ^{  2D_1D_0^2} 1 dz+\int_{ 2D_1D_0^2}^{+\infty} e^{\frac{2D_1D_0^2-z}{2D_1}}dz +\int^{- 2D_1D_0^2}_{-\infty} e^{\frac{ 2D_1D_0^2+ z }{2D_1}}dz\\

&= 4D_1D_0^2+4D_1.\end{array}\]
  
\end{proof}
Now, we return to the proof of Proposition  \ref{Global}. By using Lemma \ref{Lemma}, we have the following estimates:
\be\label{PQL1}\begin{array}{lll}\vspace{0.1cm}

& \|P_x\|_{L^1}\leq D_0^2\|h\|_{L^1},\ \ \ \ \ &\|(P_x)_Y\|_{L^1}\leq (1+D_1)D_0^2 \|h\|_{L^1},\\

& \| Q\|_{L^1}\leq D_1D_0\|h\|_{L^1}, \ \ \ \ & \|Q_Y\|_{L^1}\leq D_1^2D_0\|h\|_{L^1}.\end{array}\ee
Injecting (\ref{PQL1}) into (\ref{1}) and (\ref{2}),   we can deduce that $\| u(T)\|_{L^{2 }}^{2 }$ and $\| (u(T))_Y\|_{L^{2 }}^{2 }$ are bounded on any bounded interval of time $T$.    That is, there exists a constant $D_3:=D_3(E_0,T)>0$  such that 
\be\label{Bound3}  \|u(T)\|_{H^{1}}  \leq D_3.\ee

Finally, we estimate $\|v(T)\|_{L^{2}}$ with respect to $T$. Multiplying $v(T)$ to the second identity in (\ref{S-1}) and integrating, we deduce that 
\be\label{V1}\begin{array}{lll}
\vspace{0.1cm}
\frac{d}{dT}\|v(T)\|_{L^{2}}^2&=-2\int_{\mathbb{R}} \sin^2\frac v2 vdY+4\int_{\mathbb{R}}\cos^2 \frac v2(u^2-P+kQ_x)vdY\\
\vspace{0.1cm}
&\leq 2D_1\|v(T)\|_{L^\infty}\| \xi \sin^2 \frac{v(T)}{2}\|_{L^1}\\

&\qquad +4(\|u(T)\|_{L^\infty}\|u(T)\|_{L^2}+\|P\|_{L^2}+k\|Q_x\|_{L^2})\|v(T)\|_{L^2}.\end{array}\ee
Inject (\ref{energy}), (\ref{energy2}), (\ref{Bound1}), (\ref{Bound2}) and (\ref{Bound3}) into the estimates in $\|P\|_{L^2}$ and $\|Q_x\|_{L^2}$.  We have
\be\label{V2} \|P\|_{L^2}^2\leq D_1(\frac 12 +D_0^2)D_0^2\|h\|_{L^1}^2, \  \ \  \|Q_x\|_{L^2}^2\leq D_1^2 D_0^2 \|h\|_{L^1}^2.\ee
Then, injecting (\ref{V2}) into (\ref{V1}) and using the Gronwall inequality,  
we can deduce that $\| v(T)\|_{L^{2 }}^{2 }$  is bounded on any bounded interval of time $T$.    That is, there exists a constant $D_4:=D_4(E_0,T)>0$  such that 
\be\label{Bound4}  \|v(T)\|_{L^{2}}  \leq D_4.\ee
This completes the proof of Proposition \ref{Global}.
\end{proof}
 \begin{remark}
We defined the set of times
\[N:= \{ T\geq 0\ | \ {\rm meas} \{ Y\in \mathbb{R}; \ v(T,Y)=-\pi\}>0\}.\] Then, we claim that the measure of $N$ must be $0$. That is $ {\rm meas}\ (N)=0$. 
Indeed, from the second identity in the  semi-linear system (\ref{S-1}), we see that 
$v_T=-1$ provided 
   $\cos v=-1$,   Then, it follows from the   the absolute continuity of $v$ that  we can find $\delta>0$ such that $v_T\leq M<0$ wherever $1+\cos v<\delta$. On the other hand, we remark the fact that $\|v(T)\|_{L^2}$ remains bounded on bounded time intervals, and then we can obtain $ {\rm meas}\ (N)=0$. If not, then we have 
$\int\int_{\{ v(T,Y)=-\pi \}} v_TdYdT<0$. 
This is contradict to the fact $v_T=0$ a.e. on $\{ v(T,Y)=-\pi \}$, in terms of  the absolute continuity of the map $T\rightarrow v(T,Y)$ at every fixed $Y\in \mathbb{R}$.
\end{remark}

 \renewcommand{\theequation}{\thesection.\arabic{equation}}
\setcounter{equation}{0}
\section{Existence of the Global  Weak Solution}

Now, we start with a global solution $(u,v,\xi)$ to (\ref{S-1}) obtained in Proposition 4.1. We define $x$ and $t$ as functions of $T$ and $Y$, where $t=T$ and 
\be\label{5.1}x(T,Y):=x_0(Y)+\int_0^Tu (\tau,Y)d\tau.\ee 
For each fixed $Y$, the function $T\mapsto x(T,Y)$ thus provides a solution to the Cauchy problem 
\[\frac{d}{dT} x(T,Y)=u (T,x(T,Y)), \ \ \ x(0,Y)=x_0(Y).\]
Then,  by  taking \be\label{5.2}u(T,x):=u(T,Y)\ \ \ {\rm provided }\ \ \  x(T,Y)=x,\ee we can prove that $u(t,x)$ is a solution of Eq.(\ref{E}). Then, we can complete   the proof of Theorem \ref{main-theorem}.   
 \begin{proof} 
First, we  prove that the function $u=u(t,x)$ is well-defined. From (\ref{energy2}), we see that 
$|u(T,Y)| \leq D_0 $. By (\ref{5.1}), we get 
\[x_0(Y)-D_0 T\leq x(T,Y)\leq x_0(Y)+D_0 T.\]
From  $Y=\int_0^{x_0(Y)} (1+(u_0)_x^2) dx$, we have $\lim\limits_{Y\rightarrow \pm\infty} x_0(Y) =\pm\infty$, this yields the image of the   map $(T,Y)\mapsto(T,x(T,Y))$
is the entire plane $\mathbb{R}^2$.  We claim  
\be\label{5.4} x_Y=\cos^{2} \frac{v}{2} \cdot \xi \ee for all $t$ and a.e. $Y\in \mathbb{R}$. 
Indeed, from   (\ref{S-1}), we deduce that 
\[\begin{array}{lll}&\frac{d}{dT} \cos^{2} \frac{v}{2} \cdot \xi\\

=& \cos^2 \frac v2\cdot \xi_T  -\xi\cos\frac v2 \sin\frac v2\cdot v_T\\

=& \xi \cos^3\frac v2\sin \frac v2+2u^2\xi \cos^3\frac v2 \sin \frac v2 +2\xi \cos^3\frac v2 \sin \frac v2 (-P+kQ_x)\\

&\quad  +\xi\sin^3\frac v2 \cos \frac v2-2u^2\xi \sin \frac v2 \cos^3\frac v2-2\xi\sin\frac v2\cos^3\frac v2 (-P+kQ_x)\\

=&  \xi \sin \frac v2 \cos  \frac v2\\

=&u_Y.
\end{array}\]
And by differentiating (\ref{5.1}) with $T$ and $Y$, we deduce that 
\[ \frac{d}{dT} x_Y=\frac{d}{dT}((x_0)_Y+\int_0^T   u_Yd\tau)=  u_Y=\frac{d}{dT} \cos^{2} \frac{v}{2} \cdot \xi.\]
Moreover, from the fact that $x\mapsto 2\arctan (u_0)_x(x)$ is measurable, we see that the claim (\ref{5.4})  is true for almost every $Y\in \mathbb{R}$ at $T=0$. Then, (\ref{5.4}) remains true for all times $T\in \mathbb{R}$ and a.e. $Y\in \mathbb{R}$.  Then, 
 for any $Y_1\neq Y_2$(without loss of generality, we assume $Y_1<Y_2$), if $x(t^*, Y_1)=x(t^*, Y_2)$, then, from the monotonicity of $x(t,Y)$ on $Y$,  
for every $Y\in [Y_1,Y_2]$, we have $x(t^*, Y )=x(t^*, Y_1)$. And,
from (\ref{5.4}) we get 
\[0=x(t^*, Y_1)-x(t^*, Y_2)=\int_{Y_1}^{Y_2} x_Y(t^*,Y)dY=\int_{Y_1}^{Y_2} \cos^{2} \frac{v(t^*,Y)}{2} \cdot \xi(t^*,Y)dY.\]
Then, $\cos \frac{v(t^*,Y)}{2} \equiv 0$ for every $Y\in [Y_1,Y_2]$.  Inject this into  (\ref{uY}).
\[\begin{array}{lll}u(t^*, Y_1)-u(t^*, Y_2)&=\int_{Y_1}^{Y_2} u_Y(t^*,Y)dY\\

&=\frac12 \int_{Y_1}^{Y_2}  \xi(t^*,Y) \sin \frac{v(t^*,Y)}{2}\cos  \frac {v(t^*,Y)}{2}dY=0.\end{array}\]
This proves that the map $(t,x)\mapsto u(t(T),x(T,Y))$ at (\ref{5.2}) is well defined for all $(t,x)\in \mathbb{R}^2$.

Secondly, we prove the regularity of $u(t,x)$ and energy equation. 
From   (\ref{energy}), we see that for $t$ in any bounded interval, 
\be\label{5.7}\begin{array}{lll}

& \int_{\mathbb{R}} (u^2(t,x)+u_x^2(t,x))dx\\

=&\int_{\{\mathbb{R}\cap \cos  v \neq -1\}}[u^2(T,Y)\cos^{2 } \frac {v(T,Y)}{2}+\sin^2\frac {v(T,Y)}{2} ]\ \xi (T,Y)dY\\

\leq & D_0^2.\end{array}\ee
Now, applying the Sobolev inequality: $\|u\|_{C^{0,\gamma}}\leq C \|u\|_{H^1} $,  where $\gamma=1-\frac12$.  From (\ref{5.7}), we get $\|u\|_{C^{0,\frac12}}\leq C \|u\|_{H^1}\leq CD_0 $, which implies $u$ of $x$ is    H\"{o}lder continuous  with index $\frac12$. On the other hand, it follows from the first identity in (\ref{S-1}), (\ref{energy3}) and (\ref{energy4})  that $\|u_t\|_{L^{\infty}}\leq C(\|P_x\|_{L^{\infty}}+\|Q\|_{L^{\infty}})\leq C(D_0)$. Then, the map $t\mapsto  u(t,x(t))$ is   Lipschitz continuous along every characteristic curve $t\mapsto x(t)$. Therefore, $u=u(t,x)$ is   H\"{o}lder continuous on the any bounded interval of times.

Thirdly, we prove that the $L^2(\mathbb{R})$-norm of $u(t)$  is Lipchitz continuous with respect to $t$ on any bounded interval. 
Denote $[\tau, \tau+h]$ to be any small interval and $\tau$ in any bounded interval.  For a given point $(\tau,\bar x)$, we choose the 
characteristic $t\mapsto x(t,Y)$ $:\{T\rightarrow x(T,Y)\}$ passes through the point $(\tau, \bar x)$, i.e. $x(\tau) =\bar x$. Since
the characteristic speed $u $ satisfies $\|u \|_{L^{\infty}}\leq C \|u \|_{H^1}\leq CD_0 $, we have the following estimate.
\[\begin{array}{lll}&|u(\tau+h,\bar x)-u(\tau,\bar x)|\\
\leq& |u(\tau+h,\bar x)-u(\tau+h,x(\tau+h,Y))|\\

&\qquad \qquad+|u(\tau+h,x(\tau+h,Y))-u(\tau,x(\tau, Y)|\\

\leq & \sup\limits_{|y-x|\leq CD_0 h} |u(\tau+h,y)-u(\tau+h,\bar x)| +\int_{\tau}^{\tau+h} |P_x(t,Y)|+k|Q(t,Y)|dt

\end{array}\]
Then,   we use   $(\int_a^b f(x)dx )^2 \leq [\sqrt{b-a} \|f\|_{L^2}]^2$ for $a<b$, 
and deduce that \be\label{L2}\begin{array}{lll}\vspace{0.1cm}
&\int_{\mathbb{R}}|u(\tau+h,\bar x)-u(\tau,\bar x)|^2dx\\
\vspace{0.1cm}
\leq & C   \int_{\mathbb{R}}(\int_{\bar x-CD_0 h}^{\bar x+CD_0 h} |u_x(\tau+h,y)|dy)^2dx\\
\vspace{0.1cm}
&\qquad \quad +C\int_{\mathbb{R}}(\int_{\tau}^{\tau+h} |P_x(t,Y)|+k|Q(t,Y)|dt)^2  \cdot \xi(\tau,Y)dY\\

 \vspace{0.1cm}

\leq &C   \int_{\mathbb{R}} 2D_0 h \int_{\bar x-CD_0 h}^{\bar x+CD_0 h} |u_x(\tau+h,y)|^2dy) dx\\
\vspace{0.1cm}
&\qquad \quad +C\int_{\mathbb{R}}(h \int_{\tau}^{\tau+h} |P_x(t,Y)|^2+k|Q(t,Y)|^2dt)   \|\xi(\tau)\|_{L^\infty}dY\\

\leq &4CD_0^2 h^2  \|u_x(\tau)\|_{L^2}^2 +Ch^2\|\xi(\tau)\|_{L^\infty} (\|P_x   \|_{L^2}^2+k\|Q \|_{L^2}^2)\\

\leq & C(D_0)h^2.
\end{array}\ee
Then,  
This implies that the map $t\mapsto u(t)$ is Lipchitz continuous, in terms of the $x$-variable in $L^2(\mathbb{R})$.

Fourthly, we prove  that the function $u$ provides a weak solution of (\ref{E}). Denote
$\Gamma:=\{(t,x)| t\in \mathbb{R},\ x\in \mathbb{R}\}$. 
 For any test function $\phi(t,x)\in C_c^1(\Gamma)$, the first equation of (\ref{S-1}) has the following weak form. 
\[\begin{array}{lll}\vspace{0.1cm}

0&=\int_{\Gamma}\{u_{TY}+(P_x)_Y-kQ_Y\}\phi dYdT\\
\vspace{0.1cm}

&=\int_{\Gamma}\{u_{TY}\phi +[- u^{2}\cos^2 \frac v2 -\frac 12\sin^2 \frac v2+(P-kQ_x)\cos^2\frac v2]\xi \phi\}dYdT\\
\vspace{0.1cm}
&= \int_{\Gamma} \{-u_{Y}\phi_T+[- u^{2}\cos^2 \frac v2 -\frac 12\sin^2 \frac v2+(P-kQ_x)\cos^2\frac v2]\xi \phi\}dYdT\\
\vspace{0.1cm}
 
&= \int_{\Gamma} \{-u_{x}\phi_T\xi\cos^2\frac v2+[- u^{2}\cos^2 \frac v2 -\frac 12\sin^2 \frac v2+(P-kQ_x)\cos^2\frac v2]\xi \phi\}dYdT\\

&= \int_{ \Gamma} \{-u_{x}(\phi_t+u \phi_x)+[ -u^2-\frac 12 u_x^2+ P-kQ_x]  \phi \}dxdt,
\end{array}\]
which implies that the weak form (\ref{weak-form}) is true. Now, we introduce the Radon measures $\{\mu_{(t)}, t\in \mathbb{R} \}$: for any Lebesgue measurable set $\{x\in \mathcal{A}\} $ in $\mathbb{R}$. Supposing the corresponding pre-image set of the transformation is $\{ Y\in \mathcal{G}(\mathcal{A})\}$, we have 
\[\mu_{(t)} (\mathcal{A})=\int_{ \mathcal{G}(\mathcal{A})} \xi \sin^2\frac v2 (t,Y)dY.\]
For every $t\in \mathbb{R} \setminus N$, the absolutely continuous part of $\mu_{(t)}$ w.r.t. Lebesgue measure has density $u_x^2(t,\cdot)$ by (\ref{5.4}). It follows from (\ref{S-1}) that for any test function $\phi(t,x)\in C_c^1(\Gamma)$, 
\[\begin{array}{lll}\vspace{0.1cm}

 -\int_{\mathbb{R}^+}\{\int_{\mathbb{R}} (\phi_t+u \phi_x)d\mu_{(t)} \}dt &=- \int_{\Gamma}\phi_T\xi\sin^2\frac v2dYdT\\
\vspace{0.1cm}

&= \int_{\Gamma}\phi (\xi\sin^2\frac v2)_TdYdT\\
\vspace{0.1cm}

&=  \int_{ \Gamma}2\phi\xi (u^{ 2}-P+kQ_x)\cos \frac v2 \sin \frac v2dYdT\\

&= \int_{\Gamma}2u_x( u^{ 2} - P+kQ_x) \phi dxdt.\\
\end{array}\]
Then, (\ref{weak_en}) is true due to  ${\rm meas}\ (N)=0$.

Finally, let $u_{0,n}$ be a sequence of initial data converging to $u_0$ in $H^1(\mathbb{R}) $. From (\ref{2.1}) and (\ref{S-2}), at time $t=0$ this implies
\[\sup_{Y\in \mathbb{R}}|x_n(0,Y)-x(0,Y)|\rightarrow 0,\ \   \ \  \sup_{Y\in \mathbb{R}}|u_n(0,Y)-u(0,Y)|\rightarrow 0, \]
Meanwhile, $\|v_n(0,\cdot)-v(0,\cdot)\|_{L^2}\rightarrow 0$. This implies that $u_n(T,Y)\rightarrow u(T,Y)$, uniformly for $T,Y$ in bounded sets. Returning to the original coordinates, this yields the convergence 
\[x_n(T,Y)\rightarrow x(T,Y),\ \ \ \  u_n(t,x)\rightarrow u(t,x),\]
uniformly on bounded sets, because all functions $u$, $u_n$ are   H\"{o}lder  continuous. This completes the proof. 
\end{proof}

\renewcommand{\theequation}{\thesection.\arabic{equation}}
\setcounter{equation}{0}
\section{Uniqueness of the Global  Weak Solution}

We shall give the proof  of Theorem \ref{unique}  in  the case $t\geq 0$, and the case $t<0$ can be handled by the similar argument. \\
{\bf Step 1.}  We define
$x(t,\beta)$
to be the unique point $x$ such that
\be\label{6.3}
x(t,\beta)+\mu_{(t)} \{\,(-\infty, x)\,  \}~\leq~ \beta~\leq~x(t,\beta)
+\mu_{(t)}  \{\, (-\infty, x] \}.
\ee 
Recalling  that at every time,  $
\mu_{(t)}$ is absolutely continuous with density $u_x^2$ w.r.t.~Lebesgue measure,
the above definition gives  
\be\label{61}
\beta:=x(t,\beta)+\int_{-\infty}^{x(t,\beta)}u_x^2(t,z)~dz=x(t,\beta)+\mu_{(t)} \{(-\infty,x(t,\beta))\}.
\ee
We study the Lipschitz continuity of $x$ and $u$ as functions
of   $t,\beta$. 
\begin{lemma} \label{6.2} Let $u=u(t,x)$ be the weak  solution of (\ref{E}) satisfying (\ref{weak_en}).
Then, for every $t\ge 0$,  
\begin{itemize}
\item [(i)] $\beta\mapsto x(t,\beta)$ and $\beta\mapsto u(t,\beta):=u(t, x(t, \beta))$
implicitly defined by (\ref{61}) are Lipschitz continuous with the  constant $1$,
\item [(ii)] $t\mapsto x(t,\beta)$
is  Lipschitz continuous with a constant relaying on   $\|u_0\|_{H^{1}}$.
\end{itemize}
\end{lemma}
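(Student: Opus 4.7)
The plan for part (i) rests on the monotonicity already encoded in (\ref{61}). Since $u_x^2\ge 0$, for each fixed $t$ the map $F_t:x\mapsto x+\int_{-\infty}^{x}u_x^2(t,z)\,dz$ is non-decreasing with ``slope'' bounded below by $1$, and $\beta\mapsto x(t,\beta)$ is its generalized inverse. Concretely, for $\beta_1<\beta_2$, writing $x_i:=x(t,\beta_i)$, (\ref{61}) gives
\[
\beta_2-\beta_1\;=\;(x_2-x_1)+\int_{x_1}^{x_2}u_x^2(t,z)\,dz\;\ge\; x_2-x_1\;\ge\; 0,
\]
which is the Lipschitz bound with constant $1$. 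For $u(t,\beta):=u(t,x(t,\beta))$, I combine Cauchy--Schwarz with AM--GM:
\[
|u(t,\beta_2)-u(t,\beta_1)|\;\le\;\int_{x_1}^{x_2}|u_x|\,dz\;\le\;\sqrt{x_2-x_1}\,\sqrt{\int_{x_1}^{x_2}u_x^2\,dz}\;\le\;\tfrac{1}{2}(\beta_2-\beta_1),
\]
so the Lipschitz constant is at most $1$ (in fact $\tfrac{1}{2}$).

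For part (ii), I fix $\beta$ and $t_1<t_2$ in a bounded time interval, and introduce a characteristic $t\mapsto X(t)$ through $(t_1,x(t_1,\beta))$ solving $\dot X=u(t,X)$. The H\"older continuity of $u$ in $x$ from Theorem \ref{main-theorem}(i) and the uniform bound $\|u\|_{L^\infty}\le D_0$ from Proposition \ref{Global} yield such an integral curve with $|X(t_2)-X(t_1)|\le D_0|t_2-t_1|$. Using (\ref{61}) at times $t_1$ and $t_2$,
\[
\beta-F_{t_2}(X(t_2))\;=\;F_{t_1}(X(t_1))-F_{t_2}(X(t_2))\;=\;[X(t_1)-X(t_2)]+\bigl[\mu_{(t_1)}(-\infty,X(t_1))-\mu_{(t_2)}(-\infty,X(t_2))\bigr],
\]
and the $1$-Lipschitz bound from (i) gives $|x(t_2,\beta)-X(t_2)|\le|\beta-F_{t_2}(X(t_2))|$.

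The first bracketed term is bounded by $D_0|t_2-t_1|$. For the second, I apply the measure-valued balance law (\ref{weak_en}) against a $C_c^1$ approximation of the indicator of $\{(s,y):\,t_1<s<t_2,\,y<X(s)\}$: the $u_x^2\phi_t$ and $uu_x^2\phi_x$ terms telescope to the endpoint mass difference along $X$, leaving the source contribution
\[
-2\int_{t_1}^{t_2}\int_{-\infty}^{X(s)}u_x(u^2-P+kQ_x)(s,z)\,dz\,ds,
\]
whose absolute value is bounded by $|t_2-t_1|\cdot\sup_{s}\|u_x(s)\|_{L^2}\|u^2-P+kQ_x\|_{L^2}\le C(\|u_0\|_{H^1})|t_2-t_1|$ via the a priori estimates (\ref{Bound3}) and (\ref{energy3})--(\ref{energy4}). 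Adding $|X(t_2)-X(t_1)|\le D_0|t_2-t_1|$ then yields $|x(t_2,\beta)-x(t_1,\beta)|\le C(\|u_0\|_{H^1})|t_2-t_1|$, the desired Lipschitz bound.

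The principal obstacle is the rigorous use of the balance law along $X$. Because $u$ is only H\"older in $x$, the characteristic need not be unique, and the cut-off below $X$ is not $C_c^1$; one must mollify both the curve (most cleanly via the smooth $(T,Y)$-coordinate picture of Section 2, where $(u,v,\xi)$ obey the Lipschitz system (\ref{S-1})) and the indicator, apply (\ref{weak_en}) in the regularized form, and pass to the limit using the uniform bounds of Proposition \ref{Global}.
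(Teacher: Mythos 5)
Part (i) of your proposal coincides with the paper's proof: the same monotonicity inequality $\beta_2-\beta_1\ge x(t,\beta_2)-x(t,\beta_1)$ from (\ref{61}), and the same AM--GM bound $|u(x_2)-u(x_1)|\le\int|u_x|\le\frac12\bigl(|x_2-x_1|+\int u_x^2\bigr)\le\frac12(\beta_2-\beta_1)$, so nothing to add there.

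For part (ii) your route is different from the paper's, and the step you yourself flag as ``the principal obstacle'' is a genuine gap rather than a routine mollification. Your argument needs the exact identity that, along an arbitrary Peano solution $X$ of $\dot X=u(t,X)$, the transport terms of (\ref{weak_en}) ``telescope'' so that $s\mapsto\mu_{(s)}\bigl((-\infty,X(s))\bigr)$ changes only by the source term. Because $u$ is merely H\"older continuous in $x$ and $\mu_{(s)}$ may carry a singular part for $s$ in the exceptional set $N$, the boundary contribution $\lim_{\epsilon\to0}\int\!\!\int u_x^2(\varphi^\epsilon_t+u\,\varphi^\epsilon_x)$ near the moving curve does not obviously vanish; establishing control of exactly this term is the content of the paper's subsequent characteristics lemma (Lemma \ref{6.3}), whose proof only obtains \emph{one-sided} inequalities, and only for comparison curves moving with speed $u\pm\epsilon_0$ strictly different from the transport speed, using the weak continuity of $t\mapsto\mu_{(t)}$ and ${\rm meas}(N)=0$. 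So as written, your proof of (ii) front-loads the hardest estimate of the uniqueness section and asserts it instead of proving it; moreover the clean cancellation you invoke is not available in the stated form when energy concentrates on the curve.

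The paper's own proof of (ii) is built precisely to avoid this: no characteristic is used at all. One compares $x(t,\beta)$ with the straight lines $y^{\mp}(t)=y\mp(C_\infty+C_0)(t-\tau)$, where $C_\infty\ge\|u\|_{L^\infty}$ and $C_0$ bounds $\|2(u^2-P+kQ_x)u_x\|_{L^1}$. Since these lines move strictly faster than the wave speed, the relative flux through them has a definite sign, and (\ref{weak_en}) gives only the one-sided bound $\mu_{(t)}\bigl((-\infty,\,y-C_\infty(t-\tau))\bigr)\le\mu_{(\tau)}\bigl((-\infty,y)\bigr)+C_0(t-\tau)$; combined with the defining relation (\ref{61}) this sandwiches $x(t,\beta)$ between $y^-(t)$ and $y^+(t)$, yielding the Lipschitz constant $C_\infty+C_0$. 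If you wish to keep your scheme, replace the exact balance along $X$ by such one-sided estimates for curves of speed $u\pm\epsilon$ (or simply $\pm C_\infty$); that is both sufficient for the Lipschitz bound and actually provable from (\ref{weak_en}) with your test functions.
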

\begin{proof}
(i) From the definition of $\beta$ in (\ref{61}) , we remark that for any time $ t\geq 0$, $x\mapsto \beta(t,x)$ is right continuous and strictly increasing. Then, its inverse $\beta\mapsto x(t,\beta)$ is well-defined, and is also continuous and nondecreasing.  For any $\beta_1<\beta_2$, we deduce that 
\be\label{La611} 
\begin{array}{lll}
\beta_2-\beta_1&=x(t,\beta_2)-x(t,\beta_1)+\int_{-\infty}^{x(t,\beta_1)}u_x^2(t,z)dz-\int_{-\infty}^{x(t,\beta_2)}u_x^2(t,z)dz\\

&\geq x(t,\beta_2)-x(t,\beta_1)+\mu_{(t)}\{(x(t,\beta_1), x(t,\beta_2)\}.
\end{array}\ee
Hence, we get 
$x(t,\beta_2)-x(t,\beta_1)~\leq~ \beta_2-\beta_1$, and the map  $\beta\mapsto x(t,\beta)$ is   Lipchitz continuous with the constant $1$. For  $\beta\mapsto u(t,\beta)$, from $|u(x_1)-u(x_2)|\leq \int_{x_1}^{x_2}|u_x|dx\leq \frac12 (|x_2-x_1|+\int_{x_1}^{x_2} u_x ^2dx)$  we see that for any $\beta_1<\beta_2$
\[
|u(t,x(t,\beta_2))-u(t,x(t,\beta_1))| \leq  \frac{1}{2}[x(t,\beta_2)-x(t,\beta_1)+\mu_{(t)}\{(x(t,\beta_1),x(t,\beta_2))\}.\]
From (\ref{La611}),   $\beta\mapsto u(t,\beta)$ is Lipchitz continuous with constant $ \frac{1}{2}  <1$. 
 
 (ii) From the Sobolev embedding inequality, we have 
$\|u\|_{L^{\infty}}\leq C\|u\|_{H^{1}}:=C_{\infty}$. 
 Assume $x(t,\beta)=y$. We remark that the family of measures $\mu_{(t)}$ satisfies the balance law (\ref{weak_en}), where for each $t$, the source term $2(u^{ 2}-P+kQ_x)u_x $ in (\ref{weak_en}) has the following estimate by the H\"{o}lder inequality.
\[ \|2(u^{ 2}-P+kQ_x)u_x \|_{L^1} \leq 2\|u_x\|_{L^2} (\|P\|_{L^2}+k\|Q_x\|_{L^2}+\|u^{2}\|_{L^2})\leq C_0, \] 
where $C_0$ depends only on $\|u\|_{H^1}$. Thus, for any $t>\tau$,   from  (\ref{weak_en}), 
 \[\begin{array}{lll}\mu_{(t)}\{(-\infty,y-C_{\infty}(t-\tau))\}&\leq \mu_{(\tau)}\{(-\infty,y)\}+\int_{\tau}^t \|2(u^{ 2}-P+kQ_x)u_x \|_{L^1}dt\\
 
 &\leq \mu_{(\tau)}\{(-\infty,y)\}+C_0(t-\tau).\end{array}\]
 Let $y^{-}(t)=y-(C_{\infty}+C_0)(t-\tau)$. Then, we have
 \[\begin{array}{lll}
 y^{-}(t)+\mu_{(t)}\{(-\infty,y^{-}(t))\}&\leq y-(C_{\infty}+C_0)(t-\tau)+\mu_{(t)}\{(-\infty,y)\}+C_0(t-\tau)\\
 
 &\leq y-\mu_{(\tau)}\{(-\infty,y)\}\leq \beta. 
 \end{array}\]
This implies that $x(t,\beta)\geq y^{-}(t)$ for all $t>\tau$. And we can obtain $x(t,\beta)\leq y^{+}(t):=y+ (C_{\infty}+C_0)(t-\tau)$ by using the similar argument. This completes the proof of the uniformly Lipchitz continuity of the mapping $t\mapsto x(t,\beta)$. 
\end{proof}

 {\bf Step 2.}  The next lemma shows that characteristics can be uniquely determined by an integral equation
combining the characteristic equation and balance law of $u_x^2$, which is crucial to study the uniqueness of the conservative solution of Eq.(\ref{E}).
 
\begin{lemma} \label{6.3} Let $u=u(t,x)$ be the weak  solution of Eq.(\ref{E}) satisfying (\ref{weak_en}). Then, for any $x_0\in \mathbb{R}$ there exists a unique
Lipschitz continuous map  $t\mapsto x(t)$ which
satisfies
\be\label{Characteristic}\frac {d}{dt}x(t)=u (t,x(t)),\ \ \ \ \ \ \  \ x(0)=x_0 \ee 
and \be\label{La621}   \frac{d}{dt}\int_{-\infty}^{x(t)}u_x^2(t,x)  dx = \int_{-\infty}^{x(t)}\left[ 2(u^{ 2}-P+kQ_x)u_x  \right](t,x)dx,
\ x(0)=  x_0,
\ee 
for a.e.~$t\geq 0$.
Furthermore, for any $0\le \tau\leq t$, we have
\be\label{La622}  
u(t, x(t)) - u(\tau, x(\tau)) ~=~-\int_\tau^t (P_x -kQ)(s, x(s))\, ds\,.\ee 
\end{lemma}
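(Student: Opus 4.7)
My plan is to build a candidate characteristic using the Lipschitz parametrization from Lemma \ref{6.2}, verify that it satisfies both evolution equations (\ref{Characteristic}) and (\ref{La621}), derive (\ref{La622}) from the ODE, and prove uniqueness via a Gronwall-type argument.

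For the construction, I would set $\beta_0 := x_0 + \int_{-\infty}^{x_0} (u_0)_x^2(z)\,dz$ and define $x(t) := x(t, \beta_0)$ through (\ref{61}). Lemma \ref{6.2}(ii) gives Lipschitz continuity of $t \mapsto x(t)$ on bounded intervals. To show that this $x(t)$ satisfies the pointwise characteristic ODE (\ref{Characteristic}), I would test the weak balance law (\ref{weak_en}) against a smooth family approximating $\chi_{(-\infty, x(t))}$, and combine the resulting identity with the constancy of $\beta_0 = x(t) + \mu_{(t)}((-\infty, x(t)))$. At a.e.~$t$ outside the exceptional set $N$ (where $\mu_{(t)}$ is absolutely continuous with density $u_x^2$), this should yield $\dot{x}(t) = u(t, x(t))$. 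The energy identity (\ref{La621}) then follows as a consequence of (\ref{Characteristic}) and the balance law: applying the chain rule to $F(t) := \int_{-\infty}^{x(t)} u_x^2\,dz$ and invoking the pointwise form $(u_x^2)_t + (u u_x^2)_x = 2u_x(u^2 - P + kQ_x)$, the boundary terms $u(t, x(t)) \cdot u_x^2(t, x(t))$ cancel. For (\ref{La622}), rewrite (\ref{E}) as $u_t + u u_x = -P_x + kQ$ and apply the chain rule along $x(\cdot)$: $\frac{d}{dt} u(t, x(t)) = u_t + u_x \dot{x} = -P_x + kQ$ at $(t, x(t))$; integrating from $\tau$ to $t$ gives the stated identity.

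For uniqueness, suppose $y(t)$ and $z(t)$ are two Lipschitz characteristics with $y(0) = z(0) = x_0$. Setting $F_y(t) := \int_{-\infty}^{y(t)} u_x^2\,dz$ and $F_z$ analogously, Cauchy-Schwarz gives $|u(t, y) - u(t, z)|^2 \leq |y - z| \cdot |F_y - F_z|$, and similarly $\bigl|\int_z^y 2u_x(u^2 - P + kQ_x)\,dz\bigr| \leq C\sqrt{|y - z|\cdot|F_y - F_z|}$, using the $L^\infty$ bounds on $u$, $P$, and $Q_x$ from Section 4. Integrating (\ref{Characteristic}) and (\ref{La621}) in time and applying AM-GM to the products under the square root yields $\phi(t) := |y(t) - z(t)| + |F_y(t) - F_z(t)| \leq C \int_0^t \phi(s)\,ds$, and Gronwall's inequality forces $\phi \equiv 0$, so $y \equiv z$.

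The main obstacle is the rigorous justification of $\dot{x}(t) = u(t, x(t))$ for the $\beta$-parametrized curve. A formal differentiation of $\beta_0 = x(t) + \int_{-\infty}^{x(t)} u_x^2\,dz$ combined with the balance law produces the ODE $\dot{x}(1 + u_x^2) = u \cdot u_x^2 - \int_{-\infty}^{x(t)} 2u_x(u^2 - P + kQ_x)\,dz$, which only reduces to $\dot{x} = u$ after invoking the structural identities from the transferred system (\ref{S-1}) where by construction the characteristics satisfy $\dot{x} = u$ together with the compatibility relations. Making this identification rigorous, while simultaneously controlling the measure-zero set of exceptional times where $\mu_{(t)}$ may carry singular concentrations and legitimizing the approximation of the discontinuous test function $\chi_{(-\infty, x(t))}$ by $C_c^1$ test functions in (\ref{weak_en}), is the delicate step of the argument.
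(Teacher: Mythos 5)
Your construction of the characteristic is where the argument breaks down. You freeze $\beta_0$ and define $x(t):=x(t,\beta_0)$ through (\ref{61}), i.e.\ you follow the level set $x(t)+\mu_{(t)}\{(-\infty,x(t))\}=\beta_0$. But this quantity is \emph{not} constant along characteristics: as your own formal computation in the last paragraph shows, constancy of $\beta_0$ together with the balance law forces the curve to move with speed $\dot x=\bigl(u\,u_x^2-\int_{-\infty}^{x(t)}2u_x(u^2-P+kQ_x)\,dz\bigr)/(1+u_x^2)$, which is not $u(t,x(t))$ in general, and no structural identity removes the discrepancy. The idea you are missing is the paper's: let $\beta$ evolve. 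Adding (\ref{Characteristic}) and (\ref{La621}) shows that the correct $\beta(t)=x(t)+\int_{-\infty}^{x(t)}u_x^2\,dz$ must solve the integral equation $\beta(t)=\beta_0+\int_0^t G(s,\beta(s))\,ds$ with $G(t,\beta)=\int_{-\infty}^{x(t,\beta)}\bigl[u_x+2(u^2-P+kQ_x)u_x\bigr]dx$; since $G_\beta=\bigl[u_x+2(u^2-P+kQ_x)u_x\bigr]x_\beta$ is bounded, $G$ is Lipschitz in $\beta$ and a contraction argument yields a unique $\beta(\cdot)$. Even then one must still prove that the resulting curve has speed $u$ — this is the heart of the paper's proof, done by inserting the Lipschitz test functions $\varphi^{\epsilon}=\min\{\rho^{\epsilon},\chi^{\epsilon}\}$ into (\ref{weak_en}), comparing the two expansions of $\beta(t)$ near $\tau$, and deriving the contradiction $\varepsilon_0\le 0$. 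Finally, invoking the transferred system (\ref{S-1}) to justify $\dot x=u$ would be circular even if it worked: Lemma \ref{6.3} must hold for an \emph{arbitrary} weak solution satisfying (\ref{weak_en}) — that is precisely what makes it usable in Theorem \ref{unique} — so you may not assume the solution arises from the construction of Sections 2--5.

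Two further steps are only formal. Your derivation of (\ref{La621}) invokes the pointwise balance law $(u_x^2)_t+(uu_x^2)_x=2u_x(u^2-P+kQ_x)$, and your derivation of (\ref{La622}) uses the chain rule $\frac{d}{dt}u(t,x(t))=u_t+u_x\dot x$; for an $H^1$ weak solution neither $u_t$ nor $u_x$ exists pointwise and both identities hold only distributionally, so, as in the paper, they must be extracted by testing (\ref{weak_en}) and the identity (\ref{CHW2}) (obtained from (\ref{weak-form}) with $\phi=\varphi_x$) against Lipschitz test functions concentrated along the curve and estimating the boundary layers, which vanish since $\|u_x(s)\|_{L^2}$ is bounded. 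On the positive side, your Gronwall argument for uniqueness — Cauchy--Schwarz giving $|u(t,y)-u(t,z)|\le\sqrt{|y-z|\,|F_y-F_z|}$, the analogous bound for the source term, then AM--GM — is essentially sound given the $L^\infty$ bounds on $u^2-P+kQ_x$, and is more direct than the paper's reduction to uniqueness for the $\beta$-equation; but it only settles the uniqueness half, and cannot repair the existence construction.
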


\begin{proof}
Firstly,   by the adapted  coordinates $(t, \beta)$, we
write the characteristic
starting at $x_0$ in the form $t\mapsto x(t)=x(t, \beta(t))$, where
$\beta(\cdot)$ is a map to be determined.
Sum  up (\ref{Characteristic}) and (\ref{La621}) and integrate w.r.t.~time. We get
\be\label{La6225}
\beta(t)~= \beta_0+\int_0^t G(s,\beta(s))\, ds\,.\ee
where $\beta=x(t) + \int_{-\infty}^{x(t)}u_x^2(t,x)dx$,  
$\beta_0=x_0 + \int_{-\infty}^{x_0}
(u_{0})_{x}^2(x) dx$ and 
 \be\label{La6224}G(t, \beta)= \int_{-\infty}^{x(s)}[  u_x + 2(u^{ 2}-P+kQ_x)u_x ](s,x)dx.\ee
For each fixed $t\geq 0$,
since the maps $x\mapsto u(t,x)$, $x\mapsto   P(t,x)$ and $x\mapsto   Q_x(t,x)$
are both in $ H^{1}(\mathbb{R})$,  the function $\beta\mapsto G(t,\beta)$
defined at (\ref{La6224}) is
uniformly bounded and absolutely continuous.
Moreover,
\[G_\beta = [   u_x + 2(u^{ 2}-P+kQ_x)u_x ]\, x_\beta = 
\frac{ u_x + 2(u^{ 2}-P+kQ_x)u_x }{1+u_x^2}~ \in ~[-C,\,C]\]
for some constant $C$ depending only on the   $H^{1}$-norm of $u$.
Hence the function $G$ in (\ref{La6224}) is Lipschitz continuous w.r.t.~$\beta$.
We can apply  the   ODE's theory  in the Banach space  of all continuous functions $\beta: \mathbb{R}^+\mapsto \mathbb{R}$ with weighted norm $\|\beta\|:=\sup\limits_{t\geq 0}e^{-2Ct}|\beta(t)|$.  Let $[\Phi \beta](t):= \beta_0+\int_0^t G(\tau,\beta(\tau))d\tau$. Assume $\|\beta-\beta_0\|=\delta>0$. we have $|\beta_1(\tau)-\beta_2(\tau)|\leq \delta e^{2C\tau}$ for all $\tau \geq 0$.  By the Lipchitz continuity of $G$,  
\[\begin{array}{lll}|[\Phi \beta_1](t)-[\Phi \beta_2](t)| \leq C\int_0^t |\beta_1(\tau)-\beta_2(\tau)|d\tau\leq \frac{\delta}{2}e^{2Ct}.\end{array}\]
 Then, $[\Phi \beta]$ is a strict contraction.  (\ref{La6224}) has a unique solution
$t\mapsto \beta(t)$, and the corresponding function $t\mapsto x(t,\beta(t))$  satisfies  (\ref{Characteristic}) and (\ref{La621}).

Secondly, from  the integral equation (\ref{La6225}), we can determine a Lipchitz continuous characteristic $x(t)$ of (\ref{Characteristic}). By the previous construction, the map
$t\mapsto x(t):= x(t, \beta(t))$ provides the unique solution to (\ref{La6225}).
Being the composition of two Lipschitz  functions, the map $t\mapsto x(t,\beta(t))$
is also Lipschitz continuous. To prove that it
 satisfies the ODE for the characteristics of (\ref{Characteristic}), it suffices to show that
 (\ref{Characteristic}) holds at each time $\tau>0$ such that
  \begin{itemize}
\item  [(i)] $x(\cdot)$ is differentiable at $t=\tau$,
\item[(ii)]   the measure $\mu_{(\tau)}$ is absolutely continuous.
\end{itemize}
Assume, on the contrary, that (i) and (ii) hold but $\frac{d}{dt} x(\tau)\not= u (\tau, x(\tau))$.
Let
 \[\label{ass}\frac{d}{dt} x(\tau)~=~ u (\tau, x(\tau))+2\ve_0\] 
 for some $\ve_0>0$(the case $\ve_0 <0$ can be handled similiarly).
To derive a contradiction we
see that, for all $t\in (\tau, \tau+\delta]$,
with $\delta>0$ small enough 
\be\label{xpmt}x^+(t):=
x(\tau) +(t-\tau) [u(\tau, x(\tau))+\ve_0 ]~<~x(t)\,.\ee
We also observe that if $\phi$  is
Lipschitz continuous with compact support then (\ref{weak_en}) is still true. For 
  any $\epsilon>0$ small, we will use the test functions.
\[ \rho^{\epsilon}(s,y):=\left\{\begin{array}{lll}&0 \qquad & {\rm if}  \quad y
\leq -\epsilon^{-1},\\

&(y+\epsilon^{-1}) \qquad &{\rm if} \quad -\epsilon^{-1}\leq y\leq 1-\epsilon^{-1},\\

&1\qquad  & {\rm if} \quad 1-\epsilon^{-1} \leq y\leq x^+(s),\\

&1-\epsilon^{-1}(y-x(s))\qquad &{\rm if} \quad  x^+(s)\leq y\leq x^+(s)+\epsilon,\\

&0 \qquad &{\rm if} \quad y\geq x^+(s)+\epsilon, \end{array}\right.\]
\be\label{timtest}\chi^\epsilon(s):=\left\{\begin{array}{lll}& 0\qquad
&{\rm if} \quad s\leq \tau-\epsilon,\\

& \epsilon^{-1}(s-\tau+\epsilon)\qquad &{\rm if} \quad \tau-\epsilon\leq s\leq \tau,\\

&1\qquad &{\rm if} \quad \tau\leq s\leq  t,\\

& 1-\epsilon^{-1}(s-t) \qquad &{\rm if} \quad t\leq s<t+\epsilon,\\

&0 \qquad &{\rm if} \quad s\geq t+\epsilon. \end{array}\right.\ee 
Let  $\varphi^{\epsilon}(s,y):=\min\{ \rho^{\epsilon}(s,y),
\,\chi^\epsilon(s)\}$. 
Use $\varphi^{\epsilon}$ as test function  in (\ref{weak_en}). 
\be\label{vpe}
\int_{\Gamma}   [u_x^2\varphi^{\epsilon}_t+u u_x^2\varphi^{\epsilon}_x
+  2(u^{ 2}-P+kQ_x)u_x \varphi^{\epsilon}
 ] \, dx dt~=~0.
\ee
For $s\in [\tau+\epsilon, \, t-\epsilon]$, we get  $\varphi^{\epsilon}_x\leq 0$ and $u (s,x)<u (\tau, x(\tau))+\epsilon_0 $ by the H\"older continuity of $u$. Then,
$\varphi^{\epsilon}_t + u (s,x) \varphi^{\epsilon}_x\geq  \varphi^{\epsilon}_t + [u (\tau, x(\tau))+\epsilon_0 ] \varphi^{\epsilon}_x=0$. 
Thus, $\label{bdt}\lim\limits_{\epsilon\to 0}\int_\tau^t(
\int_{x^+(s)-\epsilon}^{x^+(s)+\epsilon}
u_x^2(\varphi^{\epsilon}_t+u \varphi^{\epsilon}_x)(s,x)\, dx) ds
 \geq 0$ as  $t$ is sufficiently close to $\tau$. Since the family of measure $\mu_{(t)}$ depends continuously on $t$
in the topology of weak convergence, by taking  $\epsilon\to 0$ in (\ref{vpe}), we have $\tau,t\notin N$
\be\label{55}\begin{array}{lll}
& \int_{-\infty}^{x^+(t)}u_x^2(t,x)\, dx\\

=&
 \int_{-\infty}^{x(\tau)}u_x^2(\tau,x)\, dx
+ \int_\tau^t\int_{-\infty}^{x^+(s)}2(u^{ 2}-P+kQ_x)u_x \,dxds\\
  &\qquad\qquad  \qquad\qquad \qquad +\lim\limits_{\epsilon\to 0}\int_\tau^t
\int_{x^+(s)-\epsilon}^{x^+(s)+\epsilon}  u_x^2(\varphi^\epsilon_t+u \varphi^\epsilon_x)\, dx ds\\
 
\geq &  \int_{-\infty}^{x(\tau)}u_x^2(\tau,x)\, dx
+ \int_\tau^t\int_{-\infty}^{x(s)} 2(u^{ 2}-P+kQ_x)u_x \,dxds+ o_1(t-\tau),
\end{array}
\ee
where ${o_1(t-\tau)\over t-\tau}\to 0$  is a higher order infinitesimal satisfying 
\[\begin{array}{lll}|o_1(t-\tau)|&=~ |\int_\tau^t \int_{x^+(s)}^{x(s)} 2(u^{ 2}-P+kQ_x)u_x ) \,dx ds |\\

 &\leq~\| 2(u^{ 2}-P+kQ_x) \|_{L^\infty} \int_\tau^t |x(s)-x^+(s)|^{\frac 12}\, \|u_x(s,\cdot)\|_{L^2}\,ds
\\

& \leq~C  (t-\tau)^{\frac{3}{2}}  \rightarrow 0\ \  \ \ \rm{as} \ \ \ t\rightarrow \tau.\end{array}\]
For every $t>\tau$ with $t\notin N$, when $t$ is sufficiently close to $\tau$, by injecting \eqref{xpmt} and \eqref{55} into $\beta(t)$, we deduce that 
\be\label{3.19}
\begin{array}{lll}
 \beta(t)&>   x(\tau)+(t-\tau)[u (\tau,x(\tau))+\ve_0 ]+\int_{-\infty}^{x^+(t)}u_x^2(t,x)dx\\

&  \geq  x(\tau)+(t-\tau)[u (\tau,x(\tau))+\ve_0 ]+\int_{-\infty}^{x(\tau)}u_x^2(\tau,x)dx\\

& \qquad   + \int_\tau^t\int_{-\infty}^{x(s)} 2(u^{ 2}-P+kQ_x)u_x \,dxds+ o_1(t-\tau).
\end{array}
\ee 
On the other hand, from  (\ref{La6225}) and (\ref{La6224}),  a linear approximation yields
\be\label{3.18}\beta(t) =\beta(\tau) +(t-\tau) [ u (\tau, x(\tau)) 
 +\int_{-\infty}^{x(\tau)}  2(u^{ 2}-P+kQ_x)u_x dx]  + o_2(t-\tau)\,,\ee 
with $o_2(t-\tau):=\int_{\tau}^t  \int_{x^+(s)}^{x(s)}[   u_x + 2(u^{ 2}-P+kQ_x)u_x   ] dx ds $, and $\lim\limits_{t\rightarrow \tau} \frac{o_2(t-\tau)}{t-\tau}=0$. 
By combining (\ref{3.18}) and (\ref{3.19}), we see that 
\[\begin{array}{lll}
& x(\tau)+(t-\tau)[u (\tau,x(\tau))+\ve_0 ]+
\int_{-\infty}^{x(\tau)}u_x^2(\tau,x)dx\\

& \qquad   +\int_\tau^t\int_{-\infty}^{x(s)}2(u^{ 2}-P+kQ_x)u_x \, dxds+ o_1(t-\tau)\\

\leq& \beta(\tau) +(t-\tau)  [ u (\tau, x(\tau)) 
 +\int_{-\infty}^{x(\tau)} 2(u^{ 2}-P+kQ_x)u_x \, dx ]+ o_2(t-\tau).\end{array}\] 
Subtracting common terms and dividing both sides by $t-\tau$ and
letting $t\to \tau$, we get $\ve_0\leq 0$, which is  a contradiction.  Then, (\ref{Characteristic}) must hold.

Thirdly, 
  we prove (\ref{La622}).  In  (\ref{weak-form}), let $\phi=\varphi_x$ and $\varphi\in C^\infty_c$. 
Since the map $x\mapsto u(t,x)$ is absolutely continuous, we can
integrate by parts w.r.t.~$x$.
\be\label{CHW2}
  \int_{\Gamma} [u_x\varphi_t +u u_x\varphi_x + (P_x  -kQ)\varphi_x ]\, dxdt
+ \int_{\mathbb{R}} (u_{0})_{x}(x)\varphi(0,x)\, dx~=~0.\ee 
By an
approximation argument,  (\ref{CHW2}) remains
valid for any test function $\varphi$ which is
Lipschitz continuous with compact support.
For any $\epsilon>0$ sufficiently small, we thus consider the function:
\[\varrho^{\epsilon}(s,y):=\left\{\begin{array}{lll}&0 \qquad & {\rm if}\quad y\leq -
\epsilon^{-1},\\

&y+\epsilon^{-1} \qquad & {\rm if}\quad -\epsilon^{-1}\leq y\leq 1-\epsilon^{-1},\\

&1\qquad & {\rm if}\quad 1-\epsilon^{-1} \leq y\leq x(s),\\
 
&1-\epsilon^{-1}(y-x(s))\qquad & {\rm if} \quad  x(s)\leq y\leq x(s)+\epsilon,\\

&0\qquad & {\rm if}\quad y\geq x(s)+\epsilon.\end{array}\right.\]
Then, we define 
$\psi^{\epsilon}(s,y):=\min\{ \varrho^{\epsilon}(s,y),\,\chi^\epsilon(s)\}$, 
where  $\chi^\epsilon(s)$ as in (\ref{timtest}).
Take
$\varphi=\psi^{\epsilon}$  in (\ref{CHW2}) and let
$\epsilon\to 0$. From the continuity of $(P_x -k Q)$,
\be\label{Inteq}\begin{array}{lll}
  \int_{-\infty}^{x(t)} u_x(t,x)\, dx&=  \int_{-\infty}^{x(\tau)}u_x(\tau,x)\, dx
- \int_\tau^t (P_x-k Q)(s, x(s))\,ds\\

& \qquad +\lim\limits_{\epsilon\to 0}\int_{\tau-\epsilon}^{t+\epsilon}\int_{x(s)}
^{x(s)+\epsilon} u_x(\psi^{\epsilon}_t+u \psi^{\epsilon}_x) dx ds\,.\\
\end{array}\ee 
 For every time $s\in [\tau-\epsilon,t+\epsilon]$ by construction, we see that 
\[\psi^{\epsilon}_x(s,y)~=~\epsilon^{-1},
  \psi^{\epsilon}_t(s,y)+u (s, x(s))\psi^{\epsilon}_x(s,y)~=~0
\quad \hbox{for }~ x(s)<y<x(s)+\epsilon.\]
This implies
\[\begin{array}{lll}  \int_{x(s)}^{x(s)+\epsilon}
|\psi^{\epsilon}_t(s,y)+u (s,y)\psi^{\epsilon}_x(s,y)|^{2}dy =\frac{1}{\epsilon^2} \int_{x(s)}^{x(s)+\epsilon}
|u  (s, x(s))-u (s,y) |^{2}dy\\

\leq  \frac{1}{\epsilon } 
 (\max\limits_{x(s)\leq y\leq x(s)+\epsilon}|u (s,y)- u (s, x(s))| )^{2}

\leq  \frac{1}{\epsilon}
 (\int_{x(s)}^{x(s)+\epsilon} | u_x(s,y)| dy )^{2}\\

\leq  \frac{1}{\epsilon} (\epsilon \|u_x(s)\|_{L^2})^2  \rightarrow0 \  \ {\rm as}\ \ \epsilon\rightarrow 0.
\end{array}\] 
Then, we have 
\be\label{11}\begin{array}{lll}|\int_{\tau-\epsilon}^{t+\epsilon}\int_{x(s)}
^{x(s)+\epsilon} u_x(\psi^{\epsilon}_t+u \psi^{\epsilon}_x) dx ds|&\leq C \|u_x\|_{L^2}(\int_{x(s)}^{x(s)+\epsilon}(\psi^{\epsilon}_t+u \psi^{\epsilon}_x)^2dx)^{\frac12}\\

&\leq C \epsilon^{\frac 12} \|u(s)\|_{H^1}^2\rightarrow0 \  \ {\rm as}\ \ \epsilon\rightarrow 0.\end{array}\ee
It follows from (\ref{Inteq}) and  (\ref{11}) that (\ref{La622}) is true.

Finally, we prove the uniqueness of $x(t)$. Assume that there exist two different $x_1(t)$ and $x_2(t)$, which satisfy (\ref{Characteristic}) and (\ref{La621}). Now, choosing the measurable functions $\beta_1$ and $\beta_2$ such that $x_1(t)=x(t,\beta_1(t))$ and $x_2(t)=x(t,\beta_2(t))$. Then, $\beta_1(\cdot)$ and $\beta_2(\cdot)$ satisfy (\ref{La6225}) with the same initial data $x(0)=x_0$. This contradicts with the uniqueness of $\beta$. 
\end{proof}

{\bf Step 3.}  We give some additional properties of   $\beta$ and $u$, as follows.
\begin{lemma} \label{6.4}
 If $u=u(t,x)$ is the weak solution of Eq. (\ref{E}) satisfying (\ref{weak_en}). Then,
\begin{itemize}
\item [(i)] the mapping $(t,\beta)\mapsto u(t,\beta):=u(t,x(t,\beta))$ is Lipchitz continuous with a constant depending only on the norm $\|u_0\|_{H^1}$,
\item [(ii)]   denote $t\mapsto \beta(t;\tau,\beta_0)$ be the solution to the integral equation 
\be\label{La630}\beta(t)~= \beta_0+\int_{\tau}^t G(\tau,\beta(\tau))\, d\tau.\ee We deduce that there exists a constant $C$ such that for any $\beta_{1,0}$,   $\beta_{2,0}$ and any $t,\tau\geq 0$ the corresponding solutions satisfy 
\be\label{La631}|\beta(t;\tau,\beta_{1,0})-\beta(t;\tau,\beta_{2,0})|\leq e^{C|t-\tau|}|\beta_{1,0}-\beta_{2,0}|.\ee
\end{itemize}
\end{lemma}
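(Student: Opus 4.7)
The plan is to reduce part (i) to two facts already at hand: Lipschitz continuity in $\beta$ at fixed $t$ is exactly Lemma \ref{6.2}(i), while Lipschitz continuity in $t$ at fixed $\beta$ can be extracted by comparing the vertical line $t\mapsto(t,\beta_1)$ with the characteristic passing through $(t_1,x(t_1,\beta_1))$ and invoking the transport identity (\ref{La622}) from Lemma \ref{6.3}. Part (ii) is then a standard Gronwall estimate applied to the integral equation (\ref{La630}), using the Lipschitz bound on $G$ already computed inside the proof of Lemma \ref{6.3}.

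For (i), I would write
\begin{equation*}
u(t_2,\beta_2)-u(t_1,\beta_1) = \bigl[u(t_2,\beta_2)-u(t_2,\beta_1)\bigr] + \bigl[u(t_2,\beta_1)-u(t_1,\beta_1)\bigr].
\end{equation*}
Lemma \ref{6.2}(i) bounds the first bracket by $|\beta_2-\beta_1|$. For the second, fix $\beta_1$, let $t\mapsto \beta(t;t_1,\beta_1)$ be the unique solution of (\ref{La630}) starting at $(t_1,\beta_1)$, and set $\tilde x(t):=x(t,\beta(t;t_1,\beta_1))$. By Lemma \ref{6.3} the curve $\tilde x(\cdot)$ is a genuine characteristic, so
\begin{equation*}
u(t_2,\tilde x(t_2)) - u(t_1,\tilde x(t_1)) = -\int_{t_1}^{t_2}(P_x-kQ)(s,\tilde x(s))\, ds,
\end{equation*}
whose modulus is at most $C|t_2-t_1|$ by the $L^\infty$ bounds (\ref{energy3})--(\ref{energy4}). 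Combining this with
\begin{equation*}
\bigl|u(t_2,\beta_1)-u(t_2,\beta(t_2;t_1,\beta_1))\bigr|\leq \bigl|\beta_1-\beta(t_2;t_1,\beta_1)\bigr|\leq \|G\|_{L^\infty}|t_2-t_1|,
\end{equation*}
coming from Lemma \ref{6.2}(i) and (\ref{La630}) respectively, and noting that $u(t_2,\beta(t_2;t_1,\beta_1))=u(t_2,\tilde x(t_2))$ and $u(t_1,\beta_1)=u(t_1,\tilde x(t_1))$ by definition, yields Lipschitz continuity of the second bracket in $t$. Summing the two contributions gives the claim, with a constant depending only on the global bounds of Section 4 and therefore only on $\|u_0\|_{H^1}$ and the length of the time interval.

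For (ii), set $\beta_i(t):=\beta(t;\tau,\beta_{i,0})$ for $i=1,2$. Subtracting the two copies of (\ref{La630}) and using the Lipschitz bound $|G_\beta|\leq C$ derived in the proof of Lemma \ref{6.3} gives
\begin{equation*}
|\beta_1(t)-\beta_2(t)| \leq |\beta_{1,0}-\beta_{2,0}| + C\Bigl|\int_\tau^t |\beta_1(s)-\beta_2(s)|\, ds\Bigr|,
\end{equation*}
and Gronwall's inequality then produces (\ref{La631}). I expect the only real obstacle in this whole program to be bookkeeping: one has to check that the Lipschitz constant of $G$ and the $L^\infty$ bounds on $P_x$ and $Q$ remain uniform on any bounded time interval, which is exactly what Proposition \ref{Global} and its corollaries provide, with all constants ultimately reducing to $\|u_0\|_{H^1}$.
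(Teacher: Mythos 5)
Your proposal is correct and follows essentially the same route as the paper: for (i) the paper also splits into the $\beta$-direction (Lemma \ref{6.2}(i)) and the time direction, handling the latter by comparing with the solution $\beta(t)$ of (\ref{La6225}) along a characteristic via (\ref{La622}) together with the bound $|\beta(t)-\beta_0|\leq \|G\|_{L^\infty}|t-\tau|$ and the $L^\infty$ bounds on $P_x$ and $Q$; for (ii) it uses exactly the same subtraction of two copies of (\ref{La630}), the Lipschitz bound on $G$, and Gronwall's inequality.
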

\begin{proof}
(i) It follows from  (\ref{6.3}), (\ref{La622}), and  (\ref{La6225}) that 
\[\begin{array}{lll}
&| u(t, x(t, \beta_0))- u(\tau, \beta_0) |\\
  \leq&
 | u(t, x(t, \beta_0))- u(t,x(t,\beta(t)))| + | u(t, x(t,\beta(t)))- u(\tau,
x(\tau,\beta(\tau))) | \\

\leq&\frac{1}{2}|\beta(t)-\beta_0| +   (t-\tau)\|P_x-kQ\|_{L^\infty}\\

\leq&C(t-\tau),
\end{array}\]
 where $C:=\frac 12 \|G\|_{L^\infty}+ \|P_x\|_{L^\infty} +k\|Q\|_{L^\infty}>0$   depends only on  $\|u_0\|_{H^1}$.

(ii) It follows from the Lipchitz continuity of $G$ that 
\[|\beta(t;\tau,\beta_{1,0})-\beta(t;\tau,\beta_{2,0})| \leq  |\beta_{1,0}-\beta_{2,0}|+C \int_{\tau}^t|\beta(s;\tau,\beta_{1,0})-\beta(s;\tau,\beta_{2,0})|ds.\]
Then, we can obtain (\ref{La631}) by the Gronwall inequality. 
\end{proof}

{\bf Step 4.}  We give the proof of Theorem \ref{unique}. 
\begin{proof}
 Firstly, by Lemma \ref{6.2} and Lemma \ref{6.4},  the map $(t,\beta)\mapsto (x,u)(t,\beta)$ is
Lipschitz continuous.   An entirely similar argument shows that the maps $\beta\mapsto G(t,\beta):=
G(t,x(t,\beta))$ and $ \beta\mapsto {P } (t,\beta):= {P } (t, x(t,\beta))$,  $\beta\mapsto {P }_x(t,\beta):= {P_x} (t, x(t,\beta))$,
 $ \beta\mapsto Q(t,\beta):= {Q} (t, x(t,\beta))$ and $\beta\mapsto Q_x(t,\beta):= {Q}_x (t, x(t,\beta))$ are also Lipschitz continuous.
By Rademacher's theorem in \cite{Evans2010}, the partial derivatives
$x_t$,  $x_\beta$, $u_t$, $u_\beta$, $G_\beta$, $ {P}_\beta$,  $ {Q}_\beta$, $ ({P}_{x})_\beta$ and $ ({Q}_{x})_\beta$ exist almost everywhere.
Moreover, a.e.~point
$(t,\beta)$ is a Lebesgue point for these derivatives.
Calling $t\mapsto \beta(t, \beta_0)$ the unique solution to
the integral equation (\ref{La6225}), by Lemma \ref{6.3} for a.e.~$\beta_0$  the following holds.
\begin{itemize}
 \item[{\bf (GC)}] For a.e.~$t> 0$, the point
$(t, \beta(t, \beta_0))$ is a Lebesgue point for the partial derivatives
$x_t,  x_\beta, u_t, u_\beta$, $G_\beta$, $ {P}_\beta$,  $ {Q}_\beta$, $ ({P}_{x})_\beta$, $ ({Q}_{x})_\beta$.  Moreover,
$x_\beta(t, \beta(t, \beta_0))>0$ for a.e.~$t> 0$.
\end{itemize}
If (GC) holds, then we say that $t\mapsto
\beta(t,\beta_0)$ is a {\bf good characteristic}. We seek an ODE describing how the quantities
$u_\beta$ and $x_\beta$
vary along a good characteristic.
As in Lemma 6.3, we denote by $t\mapsto Z(t):=\beta(t;\tau, \beta_0)$ to the solution of
(\ref{La630}).  If $\tau,t\notin N$, assuming that $Z(t)$
is a good characteristic and differentiating (\ref{La630}) w.r.t.~$  \beta_0$, we find
\be\label{bode}
{\partial\over\partial\beta_0} Z(t)~=~
1+\int_\tau^t G_\beta(s,Z(s))
\cdot{\partial\over\partial\beta_0} Z(s)\,ds
\ee 
Next, differentiate  $x(t, Z(t))~=~x(\tau, \beta_0)+\int_\tau^t
u (s,\,x(s,Z(s)))\, ds$ w.r.t.~$\beta_0$.
\be\label{xbode}
x_\beta(t, Z(t))\cdot
{\partial\over\partial\beta_0} Z(t)~=~x_\beta(\tau,\beta_0)
+\int_\tau^t u_\beta(s,Z(s))\cdot
{\partial\over\partial\beta_0} Z(s)\,ds.
\ee 
By differentiating w.r.t.~$\beta_0$ the identity (\ref{La622}), we obtain
\be\label{ubode} 
  u_\beta(t, Z(t))\cdot
{\partial\over\partial\beta_0} Z(t)  =  u_\beta(\tau,\beta_0)
-\int_\tau^t (P_x-k Q)_\beta(s,Z(s))\cdot
{\partial\over\partial\beta_0} Z(s)\,ds.\ee
Combining (\ref{bode})--(\ref{ubode}),  we thus obtain the system of ODEs
\be\label{ODES}\left\{\begin{array}{lll} 
&{d\over dt} [{\partial\over\partial\beta_0} Z(t)
 ] = G_\beta(t,Z(t))
\cdot{\partial\over\partial\beta_0} Z(t),\\

 & {d\over dt} [x_\beta(t,Z(t))\cdot
{\partial\over\partial\beta_0} Z(t)
 ]  =  u_\beta(t,Z(t))\cdot
{\partial\over\partial\beta_0} Z(t),\\

 &{d\over dt} [u_\beta(t, Z(t))\cdot
{\partial\over\partial\beta_0} Z(t)
 ]  = -(P_x -kQ)_{\beta}(t,Z(t))\cdot
{\partial\over\partial\beta_0} Z(t).\end{array}\right.\ee 
In particular, the quantities within square brackets on the left hand sides
of (\ref{ODES}) are absolutely continuous.
Recall the fact $P_{xx}=P-(\frac{1}{2}u_x^2+ u^{2})$ and $u_x^2=\frac{1-x_\beta}{x_\beta}$ . 
 From (\ref{ODES}),   along a good characteristic we obtain 
\be\label{xubt}\left\{\begin{array}{lll} 
{d\over dt}
x_\beta+G_\beta x_\beta& = u_\beta\,,\\

 {d\over dt}u_\beta+G_\beta u_\beta&   =  (u^{ 2}-P+kQ_x ) \,{x_\beta}+\frac 12 u_x^2\frac{1}{  1+u_x^2 } ,
\end{array}\right.\ee 
where the first equation is obtained by the first two equations in $(\ref{ODES})$ and
 the second equation is obtained by the first  and third equations in $(\ref{ODES})$

Secondly,  we   go back to the original
$(t,x)$ coordinates and derive an evolution equation
for the partial derivative
$u_x$ along a "good" characteristic curve. Fix a point $(\tau,x_0)$ with $\tau\not\in N$.
Assume that $x_0$ is a Lebesgue point for the map  $x\mapsto u_x(\tau, x)$.
Let  $\beta_0$ be such that $ x_0 = x(\tau, \beta_0)$
and assume that $t\mapsto Z(t)$ is a {\em good characteristic},
so that (GC) holds. We observe that
$u_x^2(\tau,x)~=~{1\over x_\beta(\tau,\beta_0)}-1~\geq~0$, which implies  $x_\beta(\tau,\beta_0)~>~0$. 
As long as $x_\beta>0$, along the characteristic through $(\tau,x_0)$ the
partial derivative $u_x$ can be computed as
$u_x (t,x(t,Z(t)) )~=~{u_\beta(t,Z(t))\over
x_\beta(t,Z(t))}$. 
Using   (\ref{xbode})-(\ref{ubode})
describing the evolution of $u_\beta$ and $x_\beta$, we conclude that the map
$t\mapsto u_x(t, x(t,Z(t)))$ is absolutely continuous
(as long as $x_\beta\not=0$) and satisfies
\[  {d\over dt} u_x(t, x(t,Z(t)))=  \frac{d}{dt}  ({u_\beta(t,Z(t))\over
x_\beta(t,Z(t))})= u^{ 2}-P+kQ_x  -\frac{1}{2} \frac{u_\beta^2}{x_\beta^2}.\]
We remark $u_\beta^2=x_\beta^2u_x^2=x_\beta ( 1-x_\beta ) $,   and as long as $x_\beta>0$ this implies
\be\label{arctan}{d\over dt}\arctan u_x(t, x(t,Z(t)))
= (u^{ 2}-P+kQ_x)x_\beta-\frac{1}{2}(1-x_\beta).\ee
Define the function
\[
v:=\left\{
\begin{array}{lll}
2\arctan u_x \qquad  &\hbox{if}\quad 0<x_\beta\leq 1,\\
\pi \qquad  &\hbox{if}\quad x_\beta=0.
\end{array}\right.
\]
Then, we see that 
\be\label{a22} x_\beta  = {1\over 1+u_x^2} = \cos^2{v\over 2}, \qquad
 1-x_\beta = {u_x^2
\over 1+u_x^2} = \sin^2{v\over 2}\,.\ee 
In the following,  $v$ will be regarded
as a map taking values in the unit circle $\mathcal{S}:= [-\pi,\pi]$
with endpoints identified.
We claim that, along each good characteristic, the map $t\mapsto v(t):=
v(t, x(t,\beta(t;\tau,\beta_0)))$ is absolutely continuous and satisfies
\be\label{Vt}
{d\over dt}v(t) =2 (u^{ 2}-P+kQ_x)\cos^2{v\over 2}-   \sin^2{v\over 2}.\ee 
Indeed, denote by $x_\beta(t)$, $u_\beta(t)$ and $u_x(t)= u_\beta(t)/x_\beta(t)$
the
values of $x_\beta$, $u_\beta$, and $u_x$ along this particular characteristic.
By (GC) we have $x_\beta(t)>0$ for a.e.~$t>0$.

If $\tau$ is any time where $x_\beta(\tau)>0$,
we can find a neighborhood
$I=[\tau-\delta, \tau+\delta]$ such that  $x_\beta(t)>0$ on $I$.
By (\ref{arctan}) and (\ref{a22}), $v= 2\arctan(u_\beta/x_\beta)$
is absolutely continuous  restricted to $I$ and satisfies (\ref{Vt}).
To prove our claim, it thus remains to show that $t\mapsto v(t)$
is continuous on the null set $N$ of times where $x_\beta(t)=0$.
Suppose $x_\beta(0)=0$. From the fact that the identity
$u_x^2(t)~=~{{1-x_\beta(t)}\over{x_\beta(t)}}$ 
valids as long as $x_\beta>0$, it is clear that $u_x^2\to +\infty$ as $t\to  0$
and $x_\beta(t)\to 0$.   This implies $v(t)=2\arctan u_x(t)
\to \pm\pi$.  Since we identify the points $\pm\pi$, we get the
continuity of $v$ for all $t\geq 0$, proving our claim.

Thirdly,  let   $u=u(t,x)$ be a global weak solution of (\ref{E}) satisfying (\ref{weak_en}).
As shown by the previous analysis, in terms of the variables $t,\beta$ the quantities
$x,u,v$ satisfy the semil-inear system
\be\label{xuv}\left\{\begin{array}{lll}
{d\over dt} \beta(t, \beta_0)&= G(t, \beta(t, \beta_0)),\\

  {d\over dt} x(t,\beta(t,\beta_0))&= u (t,\beta(t,\beta_0)),\\
  
  {d\over dt} u(t,\beta(t,\beta_0))&=-P_x+kQ,\\

 {d\over dt} v(t,\beta(t,\beta_0))&=
2 (u^{ 2}-P+kQ_x)\cos^2{v\over 2}-  \sin^2{v\over 2}\,.
\end{array}\right.\ee 
We recall that $P$, $Q$ and $G$ were defined at (\ref{PQ}) and (\ref{La6224}), respectively.
The function $P$, $P_x$, $Q$ and $Q_x$ admit  the representations in terms of   $\beta$, 
\[P(x(\beta))=\frac{1}{2} \int_{-\infty}^{+\infty}e^{-|\int_{\beta}^{\beta^{'}}\cos^2 \frac{v(s)}{2} ds|}[ u^{2}\cos^2\frac{v(\beta^{'})}{2} +\frac{1}{2}\sin^{2}\frac{v(\beta^{'})}{2}]   d \beta^{'},\]
\[ P_x(x(\beta)) =\frac 12(\int_{\beta}^{+\infty}-\int_{-\infty}^{\beta})e^{-|\int_{\beta}^{\beta^{'}}\cos^2\frac{v(s)}{2}  ds|}  [ u^{2}\cos^2\frac{v(\beta^{'})}{2} +\frac{1}{2}\sin^{2}\frac{v(\beta^{'})}{2} ]  d\beta^{'},\]
 \[Q(x(\beta))=\frac 12\int_{-\infty}^{+\infty}e^{-|\int_{\beta}^{\beta^{'}}\cos^2\frac{v(s)}{2}  ds|} u\cos^2 \frac  {v(\beta^{'})}{2} d\beta^{'},\]
\[ Q_x(x(\beta))= \frac{1 }{2} (\int_{\beta}^{+\infty}-\int_{-\infty}^{\beta})e^{-|\int_{\beta}^{\beta^{'}}\cos^2\frac{v(s)}{2}  ds|}   u\cos^2 \frac  {v(\beta^{'})}{2}  d\beta^{'}.\]
For every $\beta_0\in \mathbb{R}$, the initial data is in the following
\be\label{ico}\left\{\begin{array}{lll}
  \beta(0, \beta_0)&=  \beta_0 ,\\

 x(0,\beta_0)&=  x(0,\beta_0),\\
  
  u(0, \beta_0)&=u_0(x(0, \beta_0)),\\

  v(0,\beta_0)&=2\arctan (u_0)_x(x(0,\beta_0)).
\end{array}\right.\ee 
 By the Lipschitz continuity of all coefficients,
the Cauchy problem (\ref{xuv})-(\ref{ico}) has a unique
solution, globally defined for all $t\geq 0$, $x\in  \mathbb{R}$.
Finally, to complete the proof of uniqueness,   we consider two solutions
$u,\tilde u$ of Eq.(\ref{E})
with the same initial data $u_0\in H^{1}(\mathbb{R})$.
For a.e.~$t\geq 0$  the
corresponding  Lipschitz continuous
maps $\beta\mapsto x(t,\beta)$, $\beta\mapsto \tilde x(t,\beta)$
are strictly increasing.  Then,  they have continuous inverses, saying
$x\mapsto \beta^* (t,x)$, $x\mapsto \tilde \beta^* (t,x)$.
By the previous analysis, the map    $(t,\beta)\mapsto (x,u,v)(t,\beta)$
is uniquely determined by the initial data $u_0$.  Therefore
$x(t,\beta)~=~\tilde x(t,\beta)$ and  $u(t,\beta)=\tilde u(t,\beta)$.
In turn, for a.e.~$t\geq 0$ this implies
\[u(t,x) ~=~u(t,\beta^*(t,x))~=~\tilde u(t,\tilde \beta^*(t,x))~=~\tilde u(t,x).\]
\end{proof}

{\bf Acknowledgment. }

This paper was   done when the author visited the School of Mathematics of the Georgia Institute of Technology. The author would like to thank the hospitality of the School of Mathematics. The author would like to thank the referees for the helpful suggestions.
This paper  was  supported by the National Natural Science Foundation of China grant No. 11371267, No. 11501395,  and  Excellent Youth Foundation of Sichuan  Scientific Committee grant No. 2014JQ0039  in China.

\vspace{0.3cm}


\begin{thebibliography}{40}

\bibitem{ACHM1994}M.S. Alber, R. Camassa, D.D. Holm and J.E. Marsden, The geometry of peaked solitons and billiard solutions of a class of integrable PDE's, Lett. Math. Phys., 32 (1994), 137-151.  

\bibitem{A1976}  N. Aronszajn, Differentiability of Lipschitzian mappings between Banach spaces, Studia Math., 57 (1976), 147-190.

\bibitem{BSS1999} R. Beals, D. H. Sattinger and J. Szmigielski, Multi-peakons and a theorem of  Stieltjes, Inverse Problems, 15 (1999), L1-L4.

\bibitem{BC2007}  A. Bressan and A. Constantin, Global conservative solutions to the Camassa-Holm equation, Arch. Rat. Mech. Anal., 183 (2007), 215-239.

\bibitem{BC2007b}  A. Bressan and A. Constantin, Global dissipative  solutions of the Camassa-Holm equation, Anal. Appl., 5 (2007), 1-27. 
\bibitem{BCZ2015} A. Bressan, G. Chen and Q. Zhang, Uniqueness of conservative solutions to the Camassa-Holm equation via characteristics, Discr. Cont. Dyn. Syst., 35 (2015), 25-42.


\bibitem{BCZ20152} A. Bressan, G. Chen  and Q. Zhang, Unique conservative solutions to a variational wave equation, Arch. Rat. Mech. Anal., 217(2015), 1069-1101.

\bibitem{CS2015} G. Chen and Y. Shen, Existence and regularity of solutions in nonlinear wave equations, Discr. Cont. Dyn. Syst., 35(2015), 3327-3342

\bibitem{CSZ2015}  G. Chen, Y. Shen and S. Zhu, Global well-posedness of weak solutions for a generalized water wave equation, preprint.



\bibitem{CH1993} R. Camassa, D.D. Holm, An integrable shallow water equation with peaked solitons, Phys. Rev. Lett.,  71 (1993), 1661-1664.  

\bibitem{CE1998} A. Constantin and J. Escher, Wave breaking for nonlinear nonlocal shallow water equations, Acta Math., 181 (1998), 229-243.

\bibitem{CGI2006} A. Constantin, V. Gerdjikov and R. Ivanov, Inverse scattering transform for the Camassa-Holm equation, Inverse Problems, 22 (2006), 2197-2207. 

\bibitem{CJ2008}  A. Constantin and R. S. Johnson, 
Propagation of very long water waves, with vorticity, over variable depth, with applications to 
tsunamis, Fluid Dynam. Res., 40 (2008), 175-211.

\bibitem{CL2009} A. Constantin and D. Lannes, The hydrodynamical relevance of the Camassa-Holm and Degasperis-Procesi equations, Arch. Ration. Mech. Anal., 192 (2009), 165-186. 

\bibitem{CM1999}  A. Constantin and H. P. McKean,  A shallow water equation on the circle, Comm. Pure Appl. Math., 52 (1999), 949-982.

\bibitem{CM2001} A. Constantin and L. Molinet, Orbital stability of solitary waves for a shallow water equation, Physica D, 157 (2001), 75-89.

\bibitem{CS2000} A. Constantin and W. Strauss, Stability of peakons, Comm. Pure Appl. Math., 53 (2000), 603-610.



\bibitem{DM2009} K.E. Dika and L. Molinet, Stability of multipeakons, Ann. Inst. H. Poincaré, Anal. Non Linéaire, 18 (2009), 1517-1532.


\bibitem{Evans2010} L.C. Evans, Partial differential equations. Second edition. American Mathematical Society, Providence, RI, 2010.

\bibitem{FF1981}B. Fuchssteiner and A.S. Fokas, Symplectic structures, their B\"{a}cklund transformations and hereditary symmetries, Physica D, 4 (1981/1982), 47-66.

\bibitem{HR2007} H. Holden and X. Raynaud, Global conservative solutions of the Camassa-Holm equation- a Lagrangian point of view, Comm. Partial Differential Equations, 32 (2007), 1511-1549.

\bibitem{Ivanov2007}  R. I. Ivanov, Water waves and integrability, Philos. Trans. Roy. Soc. Lond. Ser. A, 365 (2007), 2267-2280. 

\bibitem{Lakshmanan2007}  M. Lakshmanan, Integrable 
nonlinear wave equations and possible connections to tsunami dynamics, in: Tsunami and 
Nonlinear Waves, Springer, Berlin, 2007, pp. 31-49.


\bibitem{XZ2000} Z. Xin and P. Zhang, On the weak solutions to a shallow water equation, Comm. Pure Appl. Math., 53 (2000), 1411-1433.

\bibitem{XZ2002} Z. Xin and P. Zhang, On the uniqueness and large time behavior of the weak solutions to a shallow water equation, Comm. Partial Differential Equations, 27 (2002), 1815-1844.


\end{thebibliography}
\end{document}